\title{Average  Characteristic Polynomials of \\
Determinantal Point Processes} 
\author{
\ Adrien Hardy 
\footnote{Institut de Math\'ematiques de Toulouse, Universit\'e de Toulouse, 31062 Toulouse, France.} \hspace{2 dd}\footnote{Department of Mathematics, KU Leuven, Celestijnenlaan 200 B,
3001 Leuven, Belgium. Email address: adrien.hardy@wis.kuleuven.be} }
\numberwithin{equation}{section}
\def\Tr{\mathop{\mathrm{Tr}}\nolimits}
\newtheorem{theorem}{Theorem}[section]
\newtheorem{lemma}[theorem]{Lemma}
\newtheorem{corollary}[theorem]{Corollary}
\newtheorem{proposition}[theorem]{Proposition}
\theoremstyle{definition} 
\newtheorem{definition}[theorem]{Definition}
\newtheorem{assumption}[theorem]{Assumption}
\newtheorem{Remark}[theorem]{Remark}
\newenvironment{remark}{\begin{Remark}\rm}{\end{Remark}}
\newtheorem{Example}[theorem]{Example}
\newcommand{\eq}{\begin{equation}}
\newcommand{\qe}{\end{equation}}
\newcommand{\tr}{{\rm Tr}}
\newcommand{\R}{\mathbb{R}}
\newcommand{\C}{\mathbb{C}}
\newcommand{\N}{\mathbb{N}}
\newcommand{\E}{\mathbb{E}}
\newcommand{\bs}{\boldsymbol}
\newcommand{\bv}{\mathbf}
\newcommand{\bt}{\textbf}
\newcommand{\2}{{\rm II}}
\newcommand{\Span}{{\rm Span}}
\newcommand{\im}{{\rm Im}}
\newcommand{\di}{{\mathrm d}}
\renewcommand{\leq}{\leqslant}
\renewcommand{\geq}{\geqslant}
\begin{document}

\maketitle 

\begin{abstract}
We investigate the average characteristic  polynomial $\mathbb E\big[\prod_{i=1}^N(z-x_i)\big] $ where the $x_i$'s are  real random variables drawn from a Biorthogonal Ensemble, i.e.  a determinantal point process associated with a bounded finite-rank projection operator. For a   subclass of  Biorthogonal Ensembles, which contains Orthogonal Polynomial Ensembles and (mixed-type) Multiple Orthogonal Polynomial Ensembles, we provide a sufficient condition for its limiting zero  distribution to match with the  limiting distribution of the random variables, almost surely, as $N$ goes to infinity.  Moreover, such a condition  turns out to be  sufficient to strengthen the mean convergence to the almost sure one for the moments of the empirical measure associated to the determinantal point process, a fact of independent interest. As an application, we obtain from Voiculescu's theorems the  limiting zero distribution for multiple Hermite and multiple Laguerre polynomials, expressed in terms of  free convolutions of classical distributions with atomic measures, and then derive explicit algebraic equations for their Cauchy-Stieltjes transform. 
\end{abstract}

\begin{abstract}
On s'int\'eresse au polyn\^ome caract\'eristique moyen $\mathbb E\big[\prod_{i=1}^N(z-x_i)\big]$ associ\'e \`a des variables al\'eatoires r\'eelles  $x_1,\ldots,x_N$ qui forment un Ensemble Biorthogonal, c'est-\`a-dire  un processus ponctuel d\'eterminantal  associ\'e \`a un op\'erateur de projection born\'e et de rang fini. Pour une sous-classe d'Ensembles Biorthogonaux, qui  contient les Ensembles Polyn\^omes Orthogonaux et  les Ensembles Polyn\^omes Orthogonaux Multiples (de type mixte), nous obtenons une condition suffisante  pour que, presque s\^urement, la distribution limite de ses z\'eros coincide  avec la distribution limite des variables al\'eatoires, quand $N$ tend vers l'infini.  De plus, cette  condition  s'av\`ere \^etre \'egalement suffisante pour am\'eliorer la convergence en moyenne en convergence presque s\^ure  pour les moments de la mesure empirique associ\'ee au processus ponctuel d\'eterminantal. En application, on obtient avec des th\'eor\`emes de Voiculescu une description pour les distributions limites des z\'eros des polyn\^omes d'Hermite et de Laguerre multiples, en termes de convolutions libres de lois classiques avec des mesures atomiques, ainsi que des \'equations alg\'ebriques explicites pour leurs transform\'ees de Cauchy-Stieltjes. 
 
\end{abstract}

\section{Introduction and statement of the results}

\subsection{Introduction}
For any $N\geq 1$, let $x_1,\ldots,x_N$ be a collection of real random variables which forms a Biorthogonal Ensemble, that is a determinantal point process associated with a rank $N$ bounded projection operator. This means there exists for each $N$ a Borel measure $\mu_N$ on $\R$ and  two families $(P_{k,N})_{k=0}^{N-1}$ and $(Q_{k,N})_{k=0}^{N-1}$ of $L^2(\mu_N)$-functions which are biorthogonal, namely which satisfies 
\eq
\label{biortho}
\langle P_{k,N},Q_{m,N}\rangle_{L^2(\mu_N)}=\delta_{km},\qquad 0\leq k,m\leq N-1,
\qe 
such that the joint probability distribution on $\R^N$ of  $x_1,\ldots,x_N$ reads
\eq
\label{DPPdistr2}
\frac{1}{N!}\det\Big[P_{k-1,N}(x_i)\Big]_{i,k=1}^N\det\Big[Q_{k-1,N}(x_i)\Big]_{i,k=1}^N\prod_{i=1}^N\mu_N(\di x_i).
\qe
If we introduce the (non-necessarily symmetric) kernel 
\eq
\label{kernelsplit}
K_N(x,y)=\sum_{k=0}^{N-1}P_{k,N}(x)Q_{k,N}(y),\qquad x,y\in\R,
\qe
 observe that the distribution \eqref{DPPdistr2} can be rewritten as
\eq
\label{DPPdistr}
\frac{1}{N!}\det\Big[K_N(x_i,x_j)\Big]_{i,j=1}^N\prod_{i=1}^N\mu_N(\di x_i)
\qe
and moreover  that the operator acting on $L^2(\mu_N)$ by
\eq
\pi_N:f(x)\mapsto\int K_N(x,y)f(y)\mu_N(\di y)
\qe
is a (non-necessarily orthogonal) bounded projection operator on an $N$-dimensional  subspace of $L^2(\mu_N)$. Conversely, any bounded projection operator with finite rank acting on some $L^2$ space induces a Biorthogonal Ensemble, as a consequence of the spectral theorem for compact operators. Thus, we understand from \eqref{DPPdistr2}--\eqref{DPPdistr} that Biorthogonal Ensembles matches with the class of determinantal point processes associated with  (non-trivial) bounded finite-rank projection operators; for further information on determinantal point processes, we refer to the references  \cite{HKPV,Joh,So}.

This type of asymmetric  distributions (in the sense that $K_N$ is not necessarily symmetric) has been firstly introduced by Borodin for the purpose of studying a one parameter deformation of classical Orthogonal Polynomial Ensembles \cite{Bo}. It moreover covers a large class of important determinantal processes, like Orthogonal Polynomial Ensembles or (mixed-type) Multiple Orthogonal Polynomial Ensembles; more information concerning these ensembles will be provided later. 

To the random variables $x_1,\ldots,x_N$, we associate their  average characteristic polynomial, 
\eq
\chi_N(z)=\mathbb E\left[\,\prod_{i=1}^N(z-x_i)\right],\qquad z\in\C,
\qe
where the expectation $\E$ refers to \eqref{DPPdistr2}, and we ask the following question: What is a sufficient condition so that the asymptotic distribution of the zeros of $\chi_N$ and the limiting distribution of the random variables $x_i$'s coincide as $N\rightarrow\infty$ ? More precisely, if one denotes by $z_1,\ldots,z_N$ the (non-necessarily real nor distinct) zeros of $\chi_N$ and introduces the  zero counting probability  measure 
\eq
\label{zerocountingmeasure}
\nu_N=\frac{1}{N}\sum_{i=1}^N\delta_{z_i},
\qe
the purpose of this work is to investigate the relation between the weak convergence of $\nu_N$ and the almost sure weak convergence of the empirical  measure of the determinantal point process, namely
\eq
\hat{\mu}^N=\frac{1}{N}\sum_{i=1}^N\delta_{x_i}.
\qe

It is for example known that the  eigenvalues of an $N\times N$ random matrix drawn from the GUE  form a Biorthogonal Ensemble, and that $\chi_N$ is the $N$-th monic (i.e. with leading coefficient one) Hermite polynomial. After an appropriate rescaling, the zero distribution $\nu_N$ converges weakly  towards the semi-circle distribution  as $N\rightarrow\infty$, and so is almost surely the spectral measure $\hat{\mu}^N$. There are several examples of  Biorthogonal Ensembles for which such a simultaneous convergence for $\hat\mu^N$ and $\nu_N$ is expected, but not proved yet.
 
The aim of this work is to provide a sufficient condition so that, as $N\rightarrow\infty$, the  convergence   of the moments of $\nu_N$ is equivalent to the almost sure convergence of the moments of $\hat{\mu}^N$  for a  large class of determinantal point processes, see Theorem \ref{mainth}.   We will actually  show that this condition implies the simultaneous moment convergence of $\nu_N$ and of the mean distribution $\mathbb E\big[\hat\mu^N\big]$, defined by $\mathbb E\big[\hat\mu^N\big](A)=\mathbb E\big[\hat\mu^N(A)\big]$ for any Borel set $A\subset\R$, and  moreover  forces the moments of $\hat\mu^N$ to concentrate around  their means at a  rate $N^{1+\epsilon}$, see Theorem \ref{concentration}. At this level of generality, the latter concentration result is new and may be of independent  interest. 

\subsection{Assumptions and statement of the results}
\label{DPP}

Given a sequence of Biorthogonal Ensembles  indexed by $N$ (the number of particles), which one can  parametrize by
 \eq
 \label{seqDPP}
\Bigg\{ \mu_N,\;\Big(P_{k,N}\Big)_{k=0}^{N-1},\; \Big(Q_{k,N}\Big)_{k=0}^{N-1}\Bigg\}_{N\geq 1},
\qe
we moreover assume  the following  structural assumption to hold  (throughout this paper we denote $\N=\{0,1,2,\ldots\}$).

\begin{assumption} 
\label{structassump}
\begin{itemize}\
\item[(a)]
For each $N$, the two families  $(P_{k,N})_{k=0}^{N-1}$ and  $(Q_{k,N})_{k=0}^{N-1}$ can be completed in two infinite biorthogonal families $(P_{k,N})_{k\in\N}$ and $(Q_{k,N})_{k\in\N}$ of $L^2(\mu_N)$, that is which satisfy 
\eq
\label{Biortho}
\langle P_{k,N},Q_{m,N}\rangle_{L^2(\mu_N)}=\delta_{km},\qquad  k,m \in\N.
\qe
\item[(b)]
There exists a sequence $(\frak q_N)_{N\geq 1}$ of integers having  sub-power growth, that is for every  $n\geq 1$,
\eq
\label{growthqN}
\frak q_N=o(N^{1/n})\qquad \mbox{as }N\rightarrow\infty,
\qe
 such that for all $k\in\N$,
\[
xP_{k,N}\in\Span\Big(P_{m,N}\Big)_{m=0}^{k+\frak q_N}.
\]
\end{itemize}
\end{assumption}

The next sections provide examples of Biorthogonal Ensembles which satisfy Assumption \ref{structassump}. Let $\mathbb P$ be the probability measure associated to the product probability space $\bigotimes_N(\R^N,\mathbb P_N)$, where $(\R^N,\mathbb P_N)$ is the probability space induced by \eqref{DPPdistr2}.  The central theorem of this work is the following.

\begin{theorem}
\label{mainth}
Assume there exists $\varepsilon>0$ such that   for every $n\geq 1$,
\eq
\label{sequence}
\max_{k,m\,\in\,\N \,:\;\left|\frac{k}{N}-1\right|\leq \,\varepsilon,\;\left|\frac{m}{N}-1\right|\leq \,\varepsilon}\left|\langle  xP_{k,N},Q_{m,N}\rangle_{L^2(\mu_N)}\right| = o(N^{1/n})
\qe
 as $N\rightarrow\infty$. Then, for all $\ell\in\N$,
\eq
\label{mainthcsq}
\lim_{N\rightarrow\infty}\left|\int x^\ell \hat{\mu}^N(\di x) -\int x^\ell \nu_N(\di x)\right|=0,\qquad \mathbb P\mbox{-almost surely}.
\qe
\end{theorem}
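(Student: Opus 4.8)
The plan is to establish \eqref{mainthcsq} via a moment-by-moment analysis, splitting the difference into a comparison between $\nu_N$ and the mean measure $\E[\hat\mu^N]$ on one side, and a concentration estimate comparing $\hat\mu^N$ to its mean on the other. For fixed $\ell\in\N$, write
\eq
\left|\int x^\ell \hat\mu^N(\di x) - \int x^\ell \nu_N(\di x)\right| \leq \left|\int x^\ell \hat\mu^N(\di x) - \int x^\ell \,\E[\hat\mu^N](\di x)\right| + \left|\int x^\ell \,\E[\hat\mu^N](\di x) - \int x^\ell \nu_N(\di x)\right|.
\qe
The first term is handled by Theorem \ref{concentration} (the $N^{1+\epsilon}$ concentration rate), which under hypothesis \eqref{sequence} gives $\mathbb P\big(\,|\int x^\ell\hat\mu^N - \int x^\ell\E[\hat\mu^N]| > \delta\,\big) = O(N^{-1-\epsilon'})$ for suitable $\epsilon'>0$; summability over $N$ plus Borel--Cantelli then forces this term to vanish $\mathbb P$-almost surely. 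The second term is deterministic, and I expect it to go to zero as a direct consequence of the moment method: both $\int x^\ell \E[\hat\mu^N](\di x) = \frac1N\int x^\ell K_N(x,x)\mu_N(\di x)$ and $\int x^\ell \nu_N(\di x) = \frac1N\sum_i z_i^\ell$ (the latter a symmetric function of the zeros of $\chi_N$, hence expressible via Newton's identities in the coefficients of $\chi_N$) should admit lattice-path expansions in terms of the quantities $\int xP_{k,N}Q_{m,N}\,\di\mu_N$ appearing in \eqref{sequence}, with the two expansions differing only in negligibly many terms.

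More concretely, the key computational engine is Section \ref{latpath}: one expands $x^\ell P_{k,N}(x)$ in the basis $(P_{m,N})_{m\in\N}$ using Assumption \ref{structassump}(b) (which bounds the ``upward'' spread by $\frak q_N$) and extracts coefficients using biorthogonality against the $Q_{m,N}$'s. Iterating $\ell$ times produces a sum over lattice paths of length $\ell$ with steps controlled by $\frak q_N$, each path weighted by a product of factors of the form $\int xP_{k,N}Q_{m,N}\,\di\mu_N$. The trace $\frac1N\int x^\ell K_N(x,x)\di\mu_N = \frac1N\sum_{k=0}^{N-1}\int x^\ell P_{k,N}Q_{k,N}\,\di\mu_N$ counts closed paths starting and ending at levels $k\in\{0,\dots,N-1\}$, while $\frac1N\sum_i z_i^\ell$ — via the Heine-type / characteristic-polynomial structure ($\chi_N$ being the $N$-th monic (M)OP, so that multiplication by $x$ on the quotient by $\chi_N$ is represented by the truncated recurrence operator) — counts closed paths that additionally stay below level $N$ (i.e.\ never exceed the degree at which one reduces modulo $\chi_N$). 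The difference between the two therefore involves only paths that venture above level $N$ and come back within $\ell$ steps, hence only levels $k$ with $N-\ell\frak q_N \leq k \leq N+\ell\frak q_N$; there are $O(\frak q_N)$ such starting levels and each contributes $O\big(M_N^{\ell}\big)$ where $M_N$ is the maximum in \eqref{sequence}. Dividing by $N$ and using $\frak q_N = o(N^{1/n})$ together with $M_N = o(N^{1/n})$ for all $n$ shows this error is $o(1)$, which gives the deterministic half.

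The main obstacle I anticipate is bookkeeping rather than conceptual: making the lattice-path correspondence precise enough that the ``closed paths below level $N$'' interpretation of $\frac1N\sum z_i^\ell$ is rigorous. One must carefully identify $\chi_N$ with the relevant monic polynomial, argue that the power sums of its zeros equal traces of powers of the compression of the multiplication operator to $\Span(P_{0,N},\dots,P_{N-1,N})$ — which requires knowing this compression's characteristic polynomial is $\chi_N$, something that follows from the (M)OP identification but needs the biorthogonal structure and Assumption \ref{structassump}(b) to even be upper Hessenberg-like and thus to have $\chi_N$ as characteristic polynomial — and then control the boundary paths uniformly in the allowed range of levels $|k/N - 1|\leq\varepsilon$. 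The hypothesis \eqref{sequence} is tailored exactly to this: it bounds the path weights precisely on the window of levels that the length-$\ell$ boundary paths can reach, for $N$ large. A secondary technical point is ensuring the completed infinite biorthogonal families from Assumption \ref{structassump}(a) genuinely let one expand $x^\ell P_{k,N}$ without convergence issues — but since each such expansion is a \emph{finite} sum by \ref{structassump}(b), this is automatic. Once Theorem \ref{concentration} is in hand, assembling the two halves and invoking Borel--Cantelli is routine.
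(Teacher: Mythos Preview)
Your proposal is correct and follows essentially the same route as the paper: split into the deterministic comparison $\E[\int x^\ell\hat\mu^N]-\int x^\ell\nu_N$ (handled via the lattice-path representations of Lemmas~\ref{latpath1}--\ref{latpath2}, with the difference consisting precisely of closed paths that enter $D_N$) plus the concentration term (handled by Theorem~\ref{concentration} and Borel--Cantelli). The only minor deviation is that the paper establishes $\chi_N=\det(z-\pi_NM\pi_N)$ via a Fredholm expansion (Proposition~\ref{chiNrepresentation}) rather than the Hessenberg argument you sketch, and your path-counting bound should also include the number of paths per starting level (roughly $(2\frak q_N\ell)^\ell$, not just $M_N^\ell$), but these are exactly the bookkeeping points you already flag.
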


In practice, the sub-power growth condition \eqref{sequence} may be interpreted as the condition that a strong enough normalization for the $x_i$'s has been performed.

\begin{remark}
Assumption \ref{structassump} (a) and (b)  provide together  for each $N$ the (unique) decomposition
\eq
\label{decomposition}
xP_{k,N}=\sum_{m=0}^{k+\frak q_N}\langle xP_{k,N},Q_{m,N}\rangle_{L^2(\mu_N)}P_{m,N},\qquad k\in\N.
\qe 
Thus \eqref{sequence} is  a growth condition for the coefficients lying in a specific window of the infinite matrix (i.e.  operator on $\ell^2(\N)$) associated to the  operator $f(x)\mapsto xf(x)$ acting on  $\Span(P_{k,N})_{k\in\N}$. 
\end{remark}

Having in mind that probability measures on $\R$ with compact support are characterized by their moments, the following  consequence of Theorem \ref{mainth}  may be of use to obtain  almost sure convergence results. 
\begin{corollary}
\label{zero->part}
Under the assumption of Theorem \ref{mainth}, if there exists a probability measure $\mu^*$ on $\R$ characterized by its moments such that for all $\ell\in\N$,
\[
\lim_{N\rightarrow\infty}\int x^\ell \nu_N(\di x)=\int x^\ell \mu^*(\di x),
\]
then $\mathbb P$-almost surely $\hat\mu^N$ converges  weakly towards $\mu^*$ as $N\rightarrow\infty$.
\end{corollary}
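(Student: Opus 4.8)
The plan is to feed the moment asymptotics of Theorem \ref{mainth} into the method of moments. First, for each fixed $\ell\in\N$, Theorem \ref{mainth} provides a $\mathbb P$-full event on which $\int x^\ell\hat\mu^N(\di x)-\int x^\ell\nu_N(\di x)\to 0$; on that event, the hypothesis $\int x^\ell\nu_N(\di x)\to\int x^\ell\mu^*(\di x)$ upgrades to $\int x^\ell\hat\mu^N(\di x)\to\int x^\ell\mu^*(\di x)$. Since the intersection over $\ell\in\N$ of countably many $\mathbb P$-full events is still $\mathbb P$-full, I obtain a single event $\Omega_0$ with $\mathbb P(\Omega_0)=1$ on which $\int x^\ell\hat\mu^N(\di x)\to\int x^\ell\mu^*(\di x)$ holds simultaneously for every $\ell\in\N$.

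It then remains to argue, deterministically on $\Omega_0$, that convergence of all moments to those of a measure determined by its moments implies weak convergence. Fix an outcome in $\Omega_0$. Boundedness in $N$ of $\int x^2\hat\mu^N(\di x)$ (it converges) yields, via Markov's inequality, tightness of $(\hat\mu^N)_{N\geq 1}$, so every subsequence admits a further subsequence converging weakly to some probability measure $\mu$ on $\R$. For each $\ell$, pick the smallest even integer $2p\geq\ell+1$; since $|x|^{\ell+1}\leq 1+x^{2p}$ and $\int x^{2p}\hat\mu^N(\di x)$ is bounded, $\sup_N\int|x|^{\ell+1}\hat\mu^N(\di x)<\infty$, whence $\{x^\ell\}$ is uniformly integrable along the chosen subsequence and $\int x^\ell\mu(\di x)=\lim\int x^\ell\hat\mu^N(\di x)=\int x^\ell\mu^*(\di x)$. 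As $\mu^*$ is characterized by its moments, $\mu=\mu^*$; since every subsequential weak limit of the tight sequence $(\hat\mu^N)$ is thus $\mu^*$, the whole sequence converges weakly to $\mu^*$. This being valid on $\Omega_0$, the corollary follows.

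I do not expect a genuine obstacle here: the statement is essentially Theorem \ref{mainth} combined with a classical moment-convergence argument (see, e.g., \cite{AGZ}). The only point deserving attention---more bookkeeping than difficulty---is the uniform integrability step that lets moments pass through the weak limit when $\mu^*$ is not assumed to be compactly supported; in the compactly supported case one could shortcut it, but the argument above covers the general statement.
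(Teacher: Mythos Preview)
Your proposal is correct and matches the paper's intended argument: the paper states this corollary without a separate proof, treating it as an immediate consequence of Theorem~\ref{mainth} together with the classical method of moments. Your write-up simply spells out that standard step (tightness from bounded second moments, uniform integrability from bounded higher moments, and identification of subsequential limits via the moment problem), which is exactly what is needed.
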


Similarly, when one is interested in the limiting zero  distribution of $\chi_N$, the following corollary will be of help.

\begin{corollary}
\label{part->zero}
Under the assumption of Theorem \ref{mainth}, if
\begin{enumerate}
\item[{\rm (a)}]
 for all $N$ large enough $\chi_N$ has real zeros,
\item[{\rm (b)}]
   there exists a probability measure $\mu^*$ on $\R$ characterized by its moments such that for all $\ell\in\N$,
\eq
\label{momentconv}
\lim_{N\rightarrow\infty}\E\left[\,\int x^\ell \hat{\mu}^N(\di x)\right]=\int x^\ell \mu^*(\di x),
\qe
\end{enumerate}
then $\nu_N$ converges weakly towards $\mu^*$ as $N\rightarrow\infty$.
\end{corollary}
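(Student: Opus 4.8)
The plan is to deduce Corollary~\ref{part->zero} from Theorem~\ref{mainth} by a standard ``interchange of convergences'' argument at the level of moments, the key point being that once hypotheses (a) and (b) are in force, all three objects in play---$\nu_N$, $\hat\mu^N$, and $\E[\hat\mu^N]$---live (asymptotically) on a common compact set, so moment convergence upgrades to weak convergence.

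First I would fix $\ell\in\N$ and write the deterministic chain
\eq
\label{planchain}
\int x^\ell\,\nu_N(\di x)-\int x^\ell\,\mu^*(\di x)
=\left(\int x^\ell\,\nu_N(\di x)-\int x^\ell\,\hat\mu^N(\di x)\right)
+\left(\int x^\ell\,\hat\mu^N(\di x)-\int x^\ell\,\mu^*(\di x)\right).
\qe
By Theorem~\ref{mainth} the first bracket tends to $0$, $\mathbb P$-almost surely. For the second bracket I would use hypothesis (b): since $\int x^\ell\,\hat\mu^N(\di x)\to\int x^\ell\,\mu^*(\di x)$ holds \emph{in mean}, and since Corollary~\ref{zero->part} (or rather its proof, via the concentration estimate of Theorem~\ref{concentration} announced in the introduction) gives that $\int x^\ell\,\hat\mu^N(\di x)$ concentrates around its mean at rate $N^{1+\epsilon}$, a Borel--Cantelli argument shows the second bracket also tends to $0$, $\mathbb P$-almost surely. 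Hence $\int x^\ell\,\nu_N(\di x)\to\int x^\ell\,\mu^*(\di x)$ for every $\ell$; in particular, picking a single realization for which this holds simultaneously for all $\ell\in\N$ (a countable intersection of full-measure events), the moments of the \emph{deterministic} sequence $\nu_N$ converge to those of $\mu^*$.

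It then remains to pass from moment convergence to weak convergence of $\nu_N$. Here hypothesis (a) is essential: for $N$ large the $\nu_N$ are genuine probability measures on $\R$ (not just on $\C$), so the method of moments applies. Since $\mu^*$ is characterized by its moments, convergence of all moments of $\nu_N$ to those of $\mu^*$ forces $\nu_N\to\mu^*$ weakly, provided we also know the sequence $(\nu_N)$ is tight. Tightness follows from the uniform boundedness of, say, the second moments $\int x^2\,\nu_N(\di x)$, which is automatic from their convergence to $\int x^2\,\mu^*(\di x)<\infty$; combined with Markov's inequality this gives tightness, and then the standard method-of-moments theorem (e.g.\ convergence of moments plus moment-determinacy of the limit plus tightness implies weak convergence) concludes.

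The main obstacle---and the only genuinely non-routine point---is the second bracket in \eqref{planchain}: promoting the \emph{mean} moment convergence of hypothesis (b) to an \emph{almost sure} statement. This is exactly where one needs more than Theorem~\ref{mainth} alone, namely the concentration rate $N^{1+\epsilon}$ of Theorem~\ref{concentration}, which makes $\sum_N\mathbb P(|\int x^\ell\hat\mu^N-\E\int x^\ell\hat\mu^N|>\delta)<\infty$ for every $\delta>0$ and hence furnishes the Borel--Cantelli input. Everything else---the deterministic splitting, the reduction to a countable family of full-measure events, tightness, and the final invocation of the method of moments---is bookkeeping. One should also take a small amount of care that hypothesis (a) only guarantees \emph{real} zeros for $N$ large, so all statements about $\nu_N$ as a probability measure on $\R$ are to be read ``for $N$ large enough,'' which affects none of the limits.
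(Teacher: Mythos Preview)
Your proof is correct. It is, however, slightly more circuitous than the paper's intended route. In the proof of Theorem~\ref{mainth} (Section~\ref{prooftheorems}), the paper first establishes the purely deterministic statement \eqref{toprove},
\[
\lim_{N\to\infty}\left|\,\E\!\left[\int x^\ell \hat\mu^N(\di x)\right]-\int x^\ell \nu_N(\di x)\,\right|=0,
\]
and only afterwards invokes Theorem~\ref{concentration} and Borel--Cantelli to upgrade this to the almost sure statement \eqref{mainthcsq}. For Corollary~\ref{part->zero}, one simply combines \eqref{toprove} with hypothesis~(b) to obtain $\int x^\ell\nu_N(\di x)\to\int x^\ell\mu^*(\di x)$ directly---no random variables enter at all---and then concludes via hypothesis~(a) and the method of moments exactly as you do. Your approach instead splits through the random $\hat\mu^N$, applies Theorem~\ref{mainth} to the first bracket and Theorem~\ref{concentration} plus Borel--Cantelli to the second, and then recovers the deterministic conclusion by selecting a good realization. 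This works, but the detour through the random empirical measure is unnecessary since the two quantities you ultimately compare, $\nu_N$ and $\mu^*$, are both deterministic.
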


As an example of application, we will obtain in Section \ref{MOP} a description for the limiting zero distribution of multiple Hermite and multiple Laguerre polynomials, see Theorems \ref{ThHermite} and \ref{ThLaguerre}. At the best knowledge of the author, this is the first time that a description of these zero limiting  distributions is provided in such a level of generality.

\begin{remark}  Although it is not hard to see from our proofs that Theorem \ref{mainth} continues to hold for determinantal point processes on $\C$ (with the introduction of complex conjugations where needed), Corollaries \ref{zero->part} and \ref{part->zero} are not true in the complex setting. Indeed, consider the eigenvalues of an $N\times N$ unitary matrix distributed according to the Haar measure, which are known to form an OP Ensemble  on the unit circle with respect to its uniform measure. We have $\chi_N(z)=z^N$, and thus  $\nu_N=\delta_0$ for all $N$, but the spectral measure $\hat\mu^N$ is known to converge towards the uniform distribution on the unit circle as $N\rightarrow\infty$.
\end{remark}

On the road to establish Theorem \ref{mainth}, we prove the following  variance decay  which  basically allows to extend the mean convergence of the moments of $\hat\mu^N$ to the almost sure one, by combining the Chebyshev inequality and  the Borel-Cantelli lemma. 
 
\begin{theorem}
\label{concentration}
Under the assumptions of Theorem \ref{mainth}, for every $0<\alpha<1$ and  any $\ell\in\N$, there exists $C_{\alpha,\ell}$ independent of $N$ such that
\eq
\label{concentrationcsq}
\mathbb V{\rm ar}\left[\,\int x^\ell \hat\mu^N(\di x)\right]\leq \frac{C_{\alpha,\ell}}{N^{1+\alpha} }.
\qe
If moreover $\frak q_N$ and the left-hand side of \eqref{sequence} are bounded (seen as sequences of the parameter $N$), then \eqref{concentrationcsq} also holds for $\alpha=1$.
\end{theorem}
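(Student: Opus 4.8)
The plan is to bound the variance of the random variable $\int x^\ell \hat\mu^N(\di x) = \frac{1}{N}\sum_i x_i^\ell$ and then apply Chebyshev's inequality. Writing $p_\ell(x)=x^\ell$, the quantity of interest is $\frac{1}{N}\sum_i p_\ell(x_i)$, a linear statistic of the determinantal point process. For a determinantal point process with kernel $K_N$ (acting as a projection $\pi_N$ on $L^2(\mu_N)$), there is a classical exact formula for the variance of a linear statistic $\sum_i \varphi(x_i)$ in terms of the kernel; since $\pi_N$ is a (possibly non-orthogonal) rank-$N$ projection, one gets
\eq
\label{varformula}
{\rm Var}\Bigg(\sum_i p_\ell(x_i)\Bigg)=\Tr\big(p_\ell\,\pi_N\,p_\ell(1-\pi_N)\big)
=\sum_{k=0}^{N-1}\sum_{m\geq N}\langle p_\ell P_{k,N},Q_{m,N}\rangle\,\langle p_\ell P_{m,N},Q_{k,N}\rangle,
\qe
where the second equality uses the biorthogonal decomposition \eqref{kernelsplit} together with Assumption \ref{structassump}(a) to expand $p_\ell\,\pi_N$ and $p_\ell(1-\pi_N)$ in the $(P_{k,N},Q_{k,N})$ basis. (This is exactly the combinatorial/operator-theoretic content that the weighted lattice path machinery of Section \ref{latpath} is designed to control, so in practice I would quote that section rather than redo it.) Dividing by $N^2$, the theorem reduces to showing that the double sum in \eqref{varformula} is $O(N^{1-\alpha})$ for every $\alpha<1$, and $O(1)$ under the extra boundedness hypothesis.

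The key observation is a support (band) restriction: by \eqref{decomposition}, multiplication by $x$ shifts the $P$-index up by at most $\frak q_N$, so multiplication by $p_\ell=x^\ell$ shifts it by at most $\ell\,\frak q_N$. Hence $\langle p_\ell P_{k,N},Q_{m,N}\rangle=0$ unless $|k-m|\leq \ell\,\frak q_N$. In the double sum of \eqref{varformula} we have $k\leq N-1$ and $m\geq N$, so only indices with $N-\ell\,\frak q_N\leq k\leq N-1$ and $N\leq m\leq N-1+\ell\,\frak q_N$ contribute: there are at most $(\ell\,\frak q_N)^2$ nonzero terms, all with both indices in the window $|k/N-1|\leq\varepsilon$, $|m/N-1|\leq\varepsilon$ for $N$ large (since $\frak q_N=o(N^{1/n})$ by Assumption \ref{structassump}(b)). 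Each matrix coefficient $\langle p_\ell P_{k,N},Q_{m,N}\rangle$ should then be estimated by a sub-power bound in $N$, obtained by composing $\ell$ single-step coefficients, each of which is controlled either by \eqref{sequence} (for the near-diagonal band) or by the structural bound $\frak q_N$ on the band width; this is where the moment method of Section \ref{latpath} enters — the coefficient is a sum over weighted lattice paths of length $\ell$ whose steps lie in the admissible band, and each step weight is $o(N^{1/n})$ for every $n$. Multiplying: the number of terms is $(\ell\,\frak q_N)^2=o(N^{2/n})$ and each product of coefficients is $o(N^{\ell/n})$-type, so the whole double sum is $o(N^{\gamma})$ for every $\gamma>0$; dividing by $N^2$ gives a variance bound $o(N^{-2+\gamma})=O(N^{-(1+\alpha)})$ for any $\alpha<1$, which is \eqref{concentrationcsq} via Chebyshev. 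When $\frak q_N$ and the left side of \eqref{sequence} are bounded, every factor above is $O(1)$, the double sum is $O(1)$, and one gets the variance bound $O(N^{-2})$, i.e. \eqref{concentrationcsq} with $\alpha=1$.

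The main obstacle is establishing \eqref{varformula} and the per-coefficient estimates rigorously in the non-self-adjoint setting: $\pi_N$ need not be an orthogonal projection, so $\pi_N^2=\pi_N$ holds but $\pi_N^*\neq\pi_N$, and one must be careful that the variance trace formula and the factorization of $p_\ell\,\pi_N\,p_\ell(1-\pi_N)$ still go through with the biorthogonal pair $(P_{k,N},Q_{k,N})$ in place of an orthonormal basis — in particular that the relevant operators are trace class and the cyclicity of the trace is legitimate. This is precisely what Section \ref{latpath} handles, so modulo that machinery the argument is the short computation sketched above; the only remaining point of care is checking that all indices genuinely fall inside the $\varepsilon$-window so that hypothesis \eqref{sequence} is applicable, which follows from $\frak q_N=o(N^{1/n})$.
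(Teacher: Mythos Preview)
Your proposal is correct and is essentially the paper's own proof: your trace identity $\mathrm{Var}=\Tr\big(p_\ell\,\pi_N\,p_\ell(1-\pi_N)\big)$ is exactly Lemma~\ref{Vartrace}, and your expansion of that trace in the biorthogonal basis is precisely the lattice-path representation of Lemma~\ref{latpath3}; the paper then bounds the number of contributing paths and the size of each step weight just as you do, arriving at \eqref{CCC} and concluding by Chebyshev. One small slip: Assumption~\ref{structassump}(b) only gives the one-sided band $m\le k+\ell\,\frak q_N$ (not $|k-m|\le\ell\,\frak q_N$), but in your variance sum the constraint $k\le N-1<N\le m$ combined with this one-sided bound already forces both $k$ and $m$ into the window $[N-\ell\,\frak q_N,\,N+\ell\,\frak q_N)$, so the argument goes through unchanged.
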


Before to provide  proofs for Theorem \ref{mainth} and \ref{concentration}, we now describe    a few Biorthogonal Ensembles which are concerned by our results.

\subsection{Orthogonal Polynomial Ensembles} 
\label{OP1}
Examples of  Orthogonal Polynomial (OP) Ensembles are provided by  eigenvalue distributions of  unitary invariant Hermitian random matrices, including the GUE,  Wishart and Jacobi matrix models; they also arise from non-intersecting diffusion processes starting and ending at the origin. In the latter examples, $\mu_N$ has a density with respect to the Lebesgue measure. They moreover play a key role in the resolution of several problems from asymptotic combinatorics, such as the problem of the longest increasing subsequence of a random  permutation, the shape distribution of large Young diagrams, the  random tilings of an Aztec diamond (resp. hexagone) with dominos (resp. rhombuses). This time $\mu_N$ is a discrete measure. For further information, see \cite{Ko,Joh1,Joh2} and references therein.  

The joint probability distribution of real random variables $x_1,\ldots,x_N$ drawn from an OP Ensemble reads
\[
\frac{1}{Z_N}\prod_{1\leq i < j \leq N}\big|x_i-x_j\big|^2\prod_{i=1}^N\mu_N(\di x_i),
\]
where $Z_N$ is a  normalization constant and $\mu_N$ is a measure on $\R$ having all its moments. One can rewrite that distribution in the form \eqref{DPPdistr2} by taking for $P_{k,N}=Q_{k,N}$ the $k$-th orthonormal polynomial for $\mu_N$. The associated operator is then the orthogonal projection onto the subspace of $L^2(\mu_N)$ of polynomials having degree at most $N-1$. Thus, OP Ensembles satisfy Assumption \ref{structassump} with $\frak q_N=1$. 

An important observation, provided by a classical integral representation  for OPs attributed to Heine, see e.g. \cite[Proposition 3.8]{Dei}, is that the average characteristic polynomial $\chi_N$ associated to an OP Ensemble equals the $N$-th monic OP with respect to $\mu_N$. Since OPs are known to have real zeros, $\nu_N$ is thus supported on  $\R$.

As we shall recall in Section \ref{proof}, the mean distribution $\mathbb E\big[\hat\mu^N\big]$ of  a determinantal point process  reads $\frac{1}{N}K_N(x,x)\mu_N(\di x)$. Quite remarkably, it turns out that in the case of OP Ensembles, the  convergence  of the mean distribution  has been investigated in the approximation theory literature, where it is referred as the weak convergence of the Christoffel-Darboux kernel. Indeed, recall that OPs satisfy the three-term recurrence relation
\begin{align*}
xP_{k,N} \;& =\;a_{k+1,N}P_{k+1,N}+b_{k,N}P_{k,N}+a_{k,N}P_{k-1,N},\qquad k\geq 1,\nonumber\\
xP_{0,N} \;& =\;a_{1,N}P_{1,N}+b_{0,N}P_{0,N}.
\end{align*}
Using the determinantal point processes terminology, Nevai \cite{Nev} and Van Assche \cite{VA} actually proved   that if $a_{N,N} =o(N^{1/2})$, then for any continuous and bounded function $f$ on $\R$,
\eq
\label{weakconvOP}
\lim_{N\rightarrow\infty}\left|\E\left[\,\int f(x)\hat\mu^N(\di x)\right]-\int f(x)\nu_N(\di x)\right| =0.
\qe
Nevertheless, their proofs  involve the Gaussian quadrature associated to OPs, an argument which does not seem to be generalizable to more general Biorthogonal Ensembles.  More recently, and in the case where  the supports of the  measures $\mu_N$ are uniformly bounded, Simon \cite{S1} proved the simultaneous moment convergence of $\mathbb E\big[\hat\mu^N\big]$  and  $\nu_N$ by means of  elegant operator-theoretic arguments, which have been of inspiration for this work. The (little) novelty of Theorem \ref{mainth} for OP Ensembles is  to show that \eqref{weakconvOP} also holds when $f$ is polynomial, provided there exists $\varepsilon>0$ such that both
\[
\max_{k\in\,\N :\;\left|\frac{k}{N}-1\right|\leq\, \varepsilon}\big|a_{k,N}|\qquad \mbox{ and } \quad \max_{k\in\,\N :\;\left|\frac{k}{N}-1\right|\leq\, \varepsilon}\big| b_{k,N}\big|
\]
 have a sub-power growth as $N\rightarrow\infty$.

Our result also strengthens the mean convergence to the almost sure one, but let us mention that by using the Christoffel-Darboux formula for the kernel of OP Ensembles, the variance decay \eqref{concentrationcsq} can be alternatively obtained with our growth assumption from an easy adaptation of the proof of \cite[Theorem 4.3.1.(ii)]{PS}. The advantage of our approach here is   we do not use the Christoffel-Darboux formula, so that it applies to more general Biorthogonal Ensembles where such a formula is not available. 

Let us also mention  that Breuer and Duits  recently established in \cite{BD}  that if $a_{N,N}=o(N^{1/2})$, for any continuous and bounded function $f$ the concentration of  $\int f(x)\hat\mu^N(\di x)$ around its mean actually happens  at an exponential rate.  Their proof is based on the Laplace transform approach for concentration inequalities which provides a much more accurate upper bound than the  Chebyshev inequality we  use in our proof.

\subsection{One-parameter deformation of OP Ensembles}
Borodin introduced the concept of Biorthogonal Ensembles in \cite{Bo} to study the following one parameter deformation of an OP Ensemble
\eq
\label{biodistr}
\frac{1}{Z_N}\prod_{1\leq i < j \leq N}(x_i-x_j)(x_i^\theta-x_j^\theta)\prod_{i=1}^N\mu_N(\di x_i),
\qe
where $\theta$ is a fixed positive real number and, when $\theta$ is non-integer, we assume for the sake of simplicity that $\mu_N$ is a measure supported on $\R_+$.  The motivation to study such ensembles arises from Muttalib's work on  modeling disordered conductors in the metallic regime \cite{Mut}. 

In a similar fashion than for OP Ensembles, for any $N$ one can rewrite \eqref{biodistr} in the form \eqref{DPPdistr2} with $P_{k,N}$ a polynomial of degree $k$, and $Q_{k,N}(x)=\widetilde{Q}_{k,N}(x^\theta)$ where $\widetilde{Q}_{k,N}$ is a polynomial of degree $k$, such that the $P_{k,N}$'s and the $Q_{k,N}$'s are biorthogonal; they are called biorthogonal polynomials in the literature, see the numerous references in \cite{Bo}. These ensembles  satisfy Assumption Ê\ref{structassump} with $\frak q_N=1$.

Equivalently, one can express the biorthogonal relations between the $P_{k,N}$'s and $Q_{k,N}$'s  by the relations 
\[
\langle P_{k,N}, x^{\theta j}\rangle_{L^2(\mu_N)}=0,\quad  \langle Q_{k,N}, x^{ j}\rangle_{L^2(\mu_N)}=0, \qquad 0\leq j \leq k-1,\quad k\geq 1.
\]
It is  then easy to show by using similar arguments than in the proof of  \cite[Proposition 3.8]{Dei} that the average characteristic polynomial $\chi_N$ satisfies $\langle \chi_N, x^{\theta j}\rangle_{L^2(\mu_N)}=0$ for all $0\leq j \leq N-1$ and thus equals $P_{N,N}$ up to a multiplicative constant. 

Our results seem to be completely new for such ensembles; no Christoffel-Darboux type formula is available for the kernel $K_N$ in the general $\theta>0$ case.
 
\subsection{Multiple Orthogonal Polynomial Ensembles}
\label{MOP1}

Firstly introduced by Bleher and Kuijlaars \cite{BK} to describe the eigenvalue distribution of an additive perturbation of the GUE, breaking the unitary invariance, Multiple Orthogonal Polynomial (MOP) Ensembles show up in several perturbed matrix models \cite{BDK,BK2,DF}, in multi-matrix models \cite{Ku3, DK, DKM} as well, and in non-intersecting diffusion processes with arbitrary prescribed starting points and ending at the origin \cite{KMFW}.  For general   presentations, see \cite{Ku1,Ku2} and the references therein.

The joint distribution of real random variables $x_1,\ldots,x_N$ distributed according to a MOP Ensemble has the following form
\eq
\label{MOPdensity}
\frac{1}{Z_{\bv n , N}}\prod_{1\leq i<j\leq N}\big(x_j-x_i\big)\det\left[\begin{matrix} \Big\{x_j^{i-1}w_{1,N}(x_j)\Big\}_{i,j=1}^{n_1,\,N} \\ \vdots \\ \Big\{x_j^{i-1}w_{r,N}(x_j)\Big\}_{i,j=1}^{n_r,\,N}
\end{matrix}\right]\prod_{i=1}^N\mu_N(\di x_i),
\qe
where $\mu_N$ is a measure on $\R$ having all its moments,  $Z_{\bv n,N}$ is a  normalization constant and the weights $w_{1,N},\ldots,w_{r,N}\in L^2(\mu_N)$ are such that \eqref{MOPdensity} is indeed a probability distribution. The multi-index $\bv n =(n_1,\ldots,n_r)\in\N^r$ depends on $N$ and satisfies $\sum_{i=1}^rn_i=N$. Note that we recover OP Ensembles by taking $r=1$.

It turns out one can rewrite \eqref{MOPdensity} in the form \eqref{DPPdistr2} where the $P_{k,N}$'s are monic polynomials with $\deg P_{k,N}=k$,   the $Q_{k,N}$'s are (non-necessarily polynomial) $L^2(\mu_N)$-functions biorthogonal to the $P_{k,N}$'s, and MOP Ensembles satisfies Assumption \ref{structassump} with $\frak q_N=1$, see Section \ref{MOP}.

Kuijlaars  \cite[Proposition 2.2]{Ku1} established that the average characteristic polynomial $\chi_N$ associated to \eqref{MOPdensity} is the $\bv n$-th  (type II) MOP associated with the weights $w_{i,N}$,  $1\leq i \leq r$, and the measure $\mu_N$,  see Definition \ref{defMOPII}.

The simultaneous convergence of the empirical measure $\hat\mu^N$  and the zero distribution $\nu_N$ of the associated MOPs is expected for several MOP Ensembles. It is for example the case for non-intersecting squared Bessel paths with positive starting point and ending at the origin. Indeed, for this MOP Ensemble $\mathbb E\big[\hat\mu^N\big]$ converges towards a limiting measure described in terms of the solution of a vector equilibrium problem, see \cite[Theorem 2.4 and Appendix]{KMFW}, and the limit of $\nu_N$ benefits from the same description \cite{KR}.  The same situation holds in the two-matrix model with quartic/quadratic potentials, by combining the works \cite{DK} and \cite{DGK}. For the non-intersecting squared Bessel paths model, which is equivalent to a non-centered complex Wishart matrix model, the almost sure convergence of $\hat\mu^N$ towards the solution of the vector equilibrium problem  has recently been obtained as a consequence of a stronger large deviation principle \cite{HK}. For the two matrix model,  to prove a large deviation upper bound involving a rate function associated to a vector equilibrium problem is still an open problem, see \cite{DKM2} for further discussion.  For these two MOP Ensembles, the  almost sure simultaneous convergence of $\hat\mu^N$ and $\nu_N$ follows from Theorem \ref{mainth}, since the asymptotics of the recurrence coefficients $\langle xP_{k,N},Q_{m,N}\rangle_{L^2(\mu_N)}$'s are actually explicitly described  in \cite[(1.11)]{KR} and \cite[Theorem 5.2]{DGK} respectively. An other example of MOP Ensemble where the same conclusion holds is provided by \cite{BDK2}, in relation with the six-vertex model. 

\begin{remark} 
\label{RH}
Let us stress that our results for MOP Ensembles combine nicely with a Deift-Zhou steepest descent analysis. Indeed, it is known for such ensembles that one can represent $K_N$ in terms of the solution of a Riemann-Hilbert problem, see \cite{Ku1}. This, in principle, allows to use the Deift-Zhou steepest descent method, which yields a precise asymptotic description of $K_N$, and related quantities. In particular, the $\langle xP_{k,N},Q_{m,N}\rangle_{L^2(\mu_N)}$'s can be expressed in terms of the solution of the Riemann-Hilbert problem (see \cite[Section 5]{GKVA}) and a control of their growth would follow from that steepest descent analysis (alternatively,  a control of the growth of the nearest neighbor recurrence coefficients is also sufficient, as explained in Section \ref{MOP}). Such an asymptotic analysis also typically provides the locally uniform convergence and tail estimates for $K_N$ as $N\rightarrow\infty$,  from which would follow \eqref{momentconv}, and where the limiting measure $\mu^*$ has in general compact support. In most cases, the zeros of $\chi_N$ are real; this is always true for  important subclasses of MOPs like Angelesco or AT systems \cite{Is}. Thus, if one assumes the latter to be true, the combination of a successful Deift-Zhou steepest descent analysis together with  Corollary \ref{part->zero} and Theorem \ref{concentration}  would  provide the almost sure weak convergence of the empirical measure $\hat\mu^N$, and moreover the weak convergence  of the zero distribution $\nu_N$ of the MOPs towards $\mu^*$, without extra effort. \end{remark}

\begin{remark}  As we have seen, OP Ensembles, their $\theta$-deformation, and MOP Ensembles all satisfy Assumption \ref{structassump} with $\frak q_N=1$. A class of determinantal point processes which satisfy this assumption but for which $\frak q_N$ may grow is provided by mixed-type MOP Ensembles (where $P_{k,N}$'s are no longer polynomials), originally introduced by Daems and Kuijlaars to describe non-intersecting Brownian bridges with arbitrary starting and ending points \cite{DeK}. Delvaux showed that the average characteristic polynomial $\chi_N$ is  in this case a mixture of MOPs \cite{Del}.
\end{remark}

The rest of this work is structured as follows.  In Section \ref{proof}, we establish Theorems \ref{mainth} and \ref{concentration}. In Section \ref{MOP}, after a quick introduction to MOPs, we use  Corollary \ref{part->zero} and Voiculescu's theorems in order to identify the limiting zero distribution of the multiple Hermite and multiple Laguerre polynomials in terms of free convolutions, and moreover derive algebraic equations for their Cauchy-Stieltjes transform.  
\section{Proof of the main theorems}
\label{proof}
 
In a first step to establish Theorems \ref{mainth} and \ref{concentration}, we express all the quantities of interest in terms of traces of appropriate operators; this is a usual move in the theory of determinantal point processes.

\subsection{Step 1 : Tracial representations}
\label{trace}

Consider a   determinantal point process associated to the rank $N$ bounded projector $\pi_N$ acting on $L^2(\mu_N)$ with  kernel $K_N$ given by  \eqref{kernelsplit}, so that 
\[
{\rm Im}(\pi_N)=\Span\Big(P_{k,N}\Big)_{k=0}^{N-1}, \qquad {\rm Ker}(\pi_N)^{\perp}=\Span\Big(Q_{k,N}\Big)_{k=0}^{N-1}.
\] 
The usual definition of a determinantal point process, see e.g. \cite{Joh}, provides for any $n\geq 1$ and any Borel function $f:\R^n\rightarrow\R$ the identity
\begin{multline}
\label{defDPP}
\E\Bigg[\sum_{i_1\,\neq\,\cdots\, \neq\,i_n} f(x_{i_1},\ldots,x_{i_n})\Bigg]\\
=\int f(x_1,\ldots,x_n)\det\Big[K_N(x_i,x_j)\Big]_{i,j=1}^n\prod_{i=1}^n\mu_N(\di x_i),
\end{multline}
where the summation concerns all pairwise distinct indices taken from $\{1,\ldots,N\}$. 

Let $M$ be the  operator acting on $L^2(\mu_N)$ by  
\eq
Mf(x)=xf(x).
\qe
Then, it is  standard to show that the following identity holds.
 
\begin{lemma} 
\label{Etrace}
For any $\ell\in\N$,
\[
\mathbb E\left[\,\int x^\ell \hat\mu^N(\di x)\right]  = \frac{1}{N}{\rm Tr}\big(\pi_N M^\ell\pi_N\big).
\]
\end{lemma}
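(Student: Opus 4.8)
The plan is to start from the definition of $\hat\mu^N$ and express the left-hand side as an expectation of a symmetric sum over the particles. We have
\[
\mathbb E\left[\,\int x^\ell \hat\mu^N(\di x)\right] = \frac{1}{N}\,\mathbb E\left[\sum_{i=1}^N x_i^\ell\right],
\]
which is the $n=1$ instance of the correlation-function identity \eqref{defDPP} with $f(x)=x^\ell$. Applying \eqref{defDPP} therefore gives
\[
\mathbb E\left[\sum_{i=1}^N x_i^\ell\right] = \int x^\ell K_N(x,x)\,\mu_N(\di x).
\]
So the first step reduces the claim to identifying $\int x^\ell K_N(x,x)\,\mu_N(\di x)$ with $\Tr(\pi_N M^\ell \pi_N)$.

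For the second step I would use the kernel splitting \eqref{kernelsplit}, $K_N(x,y)=\sum_{k=0}^{N-1}P_{k,N}(x)Q_{k,N}(y)$, to write
\[
\int x^\ell K_N(x,x)\,\mu_N(\di x) = \sum_{k=0}^{N-1}\int x^\ell P_{k,N}(x)Q_{k,N}(x)\,\mu_N(\di x) = \sum_{k=0}^{N-1}\langle M^\ell P_{k,N}, Q_{k,N}\rangle_{L^2(\mu_N)},
\]
where I read $x^\ell P_{k,N}(x)$ as $(M^\ell P_{k,N})(x)$. The remaining task is to recognize the right-hand side as a trace. Here I would invoke the structure of $\pi_N$: since $\operatorname{Im}(\pi_N)=\Span(P_{k,N})_{k=0}^{N-1}$, $\operatorname{Ker}(\pi_N)^\perp=\Span(Q_{k,N})_{k=0}^{N-1}$, and the families are biorthogonal by \eqref{biortho}, the operator $\pi_N$ acts as $\pi_N f = \sum_{k=0}^{N-1}\langle f, Q_{k,N}\rangle P_{k,N}$. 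Consequently $\pi_N M^\ell \pi_N$ is a finite-rank operator whose range lies in $\Span(P_{k,N})_{k=0}^{N-1}$, and its trace can be computed by pairing against the dual basis: $\Tr(A)=\sum_{k=0}^{N-1}\langle A P_{k,N}, Q_{k,N}\rangle$ for any operator $A$ with range in that span. Taking $A=\pi_N M^\ell \pi_N$ and using $\pi_N P_{k,N}=P_{k,N}$ together with $\langle M^\ell P_{k,N}, Q_{k,N}\rangle$ already lying in the span (so that the outer $\pi_N$ can be dropped inside the pairing, as $\langle \pi_N g, Q_{k,N}\rangle = \langle g, Q_{k,N}\rangle$ for $k\le N-1$) yields exactly $\sum_{k=0}^{N-1}\langle M^\ell P_{k,N}, Q_{k,N}\rangle$, matching the previous display.

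The only genuinely delicate point is the justification that $\Tr(\pi_N M^\ell \pi_N)$ is well-defined and equals $\sum_{k=0}^{N-1}\langle \pi_N M^\ell \pi_N P_{k,N}, Q_{k,N}\rangle$ for a \emph{non-orthogonal} projection: one must check that $\pi_N M^\ell \pi_N$ is trace-class (it is finite-rank, hence automatic) and that the trace, defined basis-independently, is correctly evaluated by this biorthogonal pairing rather than by an orthonormal basis. This follows from the elementary fact that for a finite-rank operator $A$ with $\operatorname{Im}(A)\subset V:=\Span(P_{k,N})_{k=0}^{N-1}$, writing $A P_{j,N}=\sum_k a_{kj}P_{k,N}$ gives $\Tr(A)=\sum_k a_{kk}=\sum_k \langle A P_{k,N}, Q_{k,N}\rangle$ by biorthogonality \eqref{biortho} — so there is no real obstacle, just a bookkeeping check that the sandwich by $\pi_N$ does not spoil the pairing. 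I would also note in passing that $M^\ell P_{k,N}\in\Span(P_{m,N})_{m\le k+\ell\frak q_N}$ by Assumption \ref{structassump}(b), so the inner pairing only ever sees finitely many terms, but this is not strictly needed for the lemma itself.
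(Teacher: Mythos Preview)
Your proposal is correct and follows essentially the same route as the paper: apply \eqref{defDPP} with $n=1$, expand $K_N(x,x)$ via \eqref{kernelsplit}, and identify the resulting sum $\sum_{k=0}^{N-1}\langle M^\ell P_{k,N}, Q_{k,N}\rangle_{L^2(\mu_N)}$ with $\Tr(\pi_N M^\ell\pi_N)$ using biorthogonality. The only difference is that you spell out more carefully why the biorthogonal pairing computes the trace of the finite-rank operator $\pi_N M^\ell\pi_N$, whereas the paper simply asserts this step; your extra bookkeeping is sound but not strictly needed.
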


\begin{proof}
By using \eqref{defDPP} with $n=1$, \eqref{kernelsplit} and the biorthogonality relations \eqref{biortho}, we obtain
\begin{align*}
\E\left[\sum_{i=1}^Nx_i^\ell\right]  & = \sum_{k=0}^{N-1}\int x^\ell P_{k,N}(x)Q_{k,N}(x)\mu_N(\di x)\\
 & =  \sum_{k=0}^{N-1}\langle (\pi_N M^\ell \pi_N) P_{k,N} ,  Q_{k,N}\rangle_{L^2(\mu_N)} \\
 & =  \Tr\big(\pi_N M^\ell \pi_N\big).
\end{align*} 
\end{proof}

We also represent the variance of the moments in a similar fashion. 

\begin{lemma} 
\label{Vartrace}
For any $\ell\in\N$,
\[
\mathbb V{\rm ar}\left[\,\int x^\ell \hat\mu^N(\di x)\right] = \frac{1}{N^2}\Big( {\rm Tr}\big(\pi_N M^{2\ell}\pi_N\big)-{\rm Tr}\big(\pi_N M^\ell\pi_N M^\ell\pi_N\big)\Big).
\]
\end{lemma}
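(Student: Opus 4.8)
The plan is to expand $\big(\sum_{i=1}^N x_i^\ell\big)^2$ into its diagonal and off-diagonal sums, evaluate each via the correlation identity \eqref{defDPP}, and then recognise the resulting integrals as operator traces exactly as in the proof of Lemma~\ref{Etrace}. Since $\int x^\ell\hat\mu^N(\di x)=\frac{1}{N}\sum_i x_i^\ell$, I would start from the identity $N^2\,\mathrm{Var}\big[\int x^\ell\hat\mu^N(\di x)\big]=\E\big[\sum_i x_i^{2\ell}\big]+\E\big[\sum_{i\neq j}x_i^\ell x_j^\ell\big]-\big(\E\big[\sum_i x_i^\ell\big]\big)^2$. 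By \eqref{defDPP} with $n=1$ and the computation in the proof of Lemma~\ref{Etrace} applied with $\ell$ replaced by $2\ell$, the first term equals $\Tr(\pi_N M^{2\ell}\pi_N)$.

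Next I would apply \eqref{defDPP} with $n=2$ to the off-diagonal sum: it equals $\int x^\ell y^\ell\big(K_N(x,x)K_N(y,y)-K_N(x,y)K_N(y,x)\big)\mu_N(\di x)\mu_N(\di y)$. The first part factors as $\big(\int x^\ell K_N(x,x)\mu_N(\di x)\big)^2$, and since $\int x^\ell K_N(x,x)\mu_N(\di x)=\E\big[\sum_i x_i^\ell\big]$ (again \eqref{defDPP} with $n=1$), it exactly cancels the subtracted square. Hence $N^2\,\mathrm{Var}\big[\int x^\ell\hat\mu^N(\di x)\big]=\Tr(\pi_N M^{2\ell}\pi_N)-\int x^\ell y^\ell K_N(x,y)K_N(y,x)\mu_N(\di x)\mu_N(\di y)$, and it remains to identify the last integral with $\Tr(\pi_N M^\ell\pi_N M^\ell\pi_N)$.

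For that identification I would substitute the split \eqref{kernelsplit} into the double integral and use the biorthogonality \eqref{biortho} in each variable, which gives $\sum_{k,m=0}^{N-1}\langle M^\ell P_{k,N},Q_{m,N}\rangle_{L^2(\mu_N)}\,\langle M^\ell P_{m,N},Q_{k,N}\rangle_{L^2(\mu_N)}$. On the operator side, since $\pi_N P_{k,N}=P_{k,N}$ and $\pi_N f=\sum_{m=0}^{N-1}\langle f,Q_{m,N}\rangle_{L^2(\mu_N)}P_{m,N}$, and since $(P_{k,N})_{k=0}^{N-1}$, $(Q_{k,N})_{k=0}^{N-1}$ are dual bases of ${\rm Im}(\pi_N)$, the trace of the finite-rank operator $\pi_N M^\ell\pi_N M^\ell\pi_N$ is $\sum_{k=0}^{N-1}\langle \pi_N M^\ell\pi_N M^\ell P_{k,N},Q_{k,N}\rangle_{L^2(\mu_N)}$; expanding $\pi_N M^\ell P_{k,N}$ in the $P$-basis produces precisely the same double sum. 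Dividing by $N^2$ then yields the stated formula.

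The only points requiring care are bookkeeping rather than analysis. One must check that the index range in the kernel expansion is exactly $\{0,\dots,N-1\}$ so that it matches the $N$-dimensional trace (the completed families from Assumption~\ref{structassump}(a) play no role here, as $K_N$ involves only $k\le N-1$), and one must justify exchanging the finite sums with the integrals; this is immediate since each $P_{k,N},Q_{k,N}$ lies in $L^2(\mu_N)$ and, in the settings of interest, $M^\ell P_{k,N}\in L^2(\mu_N)$ as well, which holds for OP and MOP Ensembles because $\mu_N$ has all its moments. There is no genuine analytic obstacle; the statement is purely algebraic once \eqref{defDPP} is in hand.
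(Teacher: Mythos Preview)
Your proposal is correct and follows essentially the same route as the paper: the same variance decomposition into diagonal/off-diagonal sums, the same use of \eqref{defDPP} with $n=2$ to produce the double integral $\iint x^\ell y^\ell K_N(x,y)K_N(y,x)\mu_N(\di x)\mu_N(\di y)$ after the $K_N(x,x)K_N(y,y)$ term cancels the squared mean, and the same identification of that integral with $\Tr(\pi_N M^\ell\pi_N M^\ell\pi_N)$ via the kernel split \eqref{kernelsplit} and biorthogonality. The only cosmetic difference is that the paper recognises $\int K_N(x,y)y^\ell P_{k,N}(y)\mu_N(\di y)=\pi_N M^\ell P_{k,N}(x)$ directly rather than first expanding into the double sum over $k,m$.
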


\begin{proof}
We write
\[
\label{defvar}
\mathbb V{\rm ar}\left[\sum_{i=1}^Nx_i^\ell\right]  = \mathbb E\left[\sum_{i=1}^Nx_i^{2\ell}\right]+\mathbb E\Bigg[\sum_{i\,\neq \,j}x_i^\ell x_j^\ell\Bigg]-\left(\mathbb E\left[\sum_{i=1}^Nx_i^\ell\right] \right)^2
\]
in order to  obtain, thanks to  \eqref{defDPP} with $n=2$ and Lemma \ref{Etrace},
\[
\mathbb V{\rm ar}\left[\sum_{i=1}^Nx_i^\ell\right] = {\rm Tr}\big(\pi_N M^{2\ell}\pi_N\big) - \iint x^\ell y^\ell K_N(x,y)K_N(y,x)\mu_N(\di x)\mu_N(\di y).
\]
Finally, observe that
\begin{align*}
 & \iint x^\ell y^\ell K_N(x,y)K_N(y,x)\mu_N(\di x)\mu_N(\di y) \\
& = \quad \sum_{k=0}^{N-1}\int x^\ell\left(\,\int K_N(x,y)y^\ell P_{k,N}(y)\mu_N(\di y)\right)Q_{k,N}(x)\mu_N(\di x)\\
& = \quad \sum_{k=0}^{N-1}\langle \pi_N M^\ell \pi_N M^\ell \pi_N P_{k,N} , Q_{k,N} \rangle_{L^2(\mu_N)}\\
& = \quad {\rm Tr}\big(\pi_N M^\ell\pi_N M^\ell\pi_N\big)
\end{align*}
to complete the proof.
\end{proof}

We now check that the average characteristic polynomial $\chi_N$ equals the characteristic polynomial of the operator $\pi_N M\pi_N$ acting on $\im(\pi_N)$.

\begin{proposition} 
\label{chiNrepresentation}
If $\det$ stands for the determinant of endomorphisms of $\im(\pi_N)$, then
\[
\chi_N(z)=\det\big(z-\pi_N M\pi_N\big), \qquad z\in\C.
\]
\end{proposition}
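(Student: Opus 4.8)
The plan is to identify the two monic polynomials of degree $N$, $\chi_N$ and $\det(z-\pi_N M\pi_N)$, through their coefficients. From $\prod_{i=1}^N(z-x_i)=\sum_{\ell=0}^N(-1)^\ell e_\ell(x_1,\dots,x_N)\,z^{N-\ell}$, where $e_\ell$ denotes the $\ell$-th elementary symmetric function, one gets $\chi_N(z)=\sum_{\ell=0}^N(-1)^\ell\,\E\big[e_\ell(x_1,\dots,x_N)\big]\,z^{N-\ell}$. On the other side, if $A$ denotes the matrix, in an arbitrary basis, of the endomorphism $T:=\pi_N M\pi_N$ of $\im(\pi_N)$, then $\det(z-T)=\sum_{\ell=0}^N(-1)^\ell\big(\sum_{|S|=\ell}\det A_S\big)z^{N-\ell}$, where $A_S$ is the principal submatrix of $A$ with rows and columns indexed by $S$, and $\sum_{|S|=\ell}\det A_S$ is exactly the $\ell$-th elementary symmetric function of the eigenvalues of $T$. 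Hence it suffices to show, for every $0\le\ell\le N$, that $\E\big[e_\ell(x_1,\dots,x_N)\big]=\sum_{|S|=\ell}\det A_S$.

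For this I would exploit the determinantal structure. Since $e_\ell(x_1,\dots,x_N)=\frac1{\ell!}\sum_{i_1\neq\cdots\neq i_\ell}x_{i_1}\cdots x_{i_\ell}$, the correlation formula \eqref{defDPP} applied with $f(x_1,\dots,x_\ell)=x_1\cdots x_\ell$ yields
\[
\E\big[e_\ell(x_1,\dots,x_N)\big]=\frac1{\ell!}\int x_1\cdots x_\ell\,\det\big[K_N(x_i,x_j)\big]_{i,j=1}^\ell\,\prod_{i=1}^\ell\mu_N(\di x_i),
\]
all integrals involved being absolutely convergent because $xP_{k,N}\in L^2(\mu_N)$ for each $k$ by Assumption \ref{structassump}(b). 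I would then insert the decomposition \eqref{kernelsplit}, expand the determinant over the permutations $\sigma$ of $\{1,\dots,\ell\}$ as $\det[K_N(x_i,x_j)]_{i,j=1}^\ell=\sum_\sigma\mathrm{sgn}(\sigma)\prod_{i=1}^\ell K_N(x_i,x_{\sigma(i)})$, and integrate out the variables one at a time. After reindexing the $Q$-factors by $j=\sigma(i)$, each variable $x_j$ carries the factor $x_j$ together with exactly one factor $P_{\,\cdot\,,N}(x_j)$ and one factor $Q_{\,\cdot\,,N}(x_j)$, so the integrations produce only the numbers $a_{ij}:=\langle xP_{j,N},Q_{i,N}\rangle_{L^2(\mu_N)}$, and a short reorganisation of the sums gives
\[
\E\big[e_\ell(x_1,\dots,x_N)\big]=\frac1{\ell!}\sum_{k_1,\dots,k_\ell=0}^{N-1}\det\big[a_{k_i,k_j}\big]_{i,j=1}^\ell .
\]

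To conclude, observe that any tuple $(k_1,\dots,k_\ell)$ with a repeated entry contributes $0$ (a repeated row in the determinant), while the $\ell!$ tuples obtained by permuting the entries of a fixed $\ell$-subset $S\subseteq\{0,\dots,N-1\}$ all give the same value $\det A_S$, with $A=(a_{ij})_{i,j=0}^{N-1}$; hence the right-hand side equals $\sum_{|S|=\ell}\det A_S$. Finally $A$ is exactly the matrix of $T=\pi_N M\pi_N$ on $\im(\pi_N)$ in the basis $(P_{k,N})_{k=0}^{N-1}$ with dual basis $(Q_{k,N})_{k=0}^{N-1}$: indeed $\pi_N P_{j,N}=P_{j,N}$, while $\pi_N g=\sum_{k=0}^{N-1}\langle g,Q_{k,N}\rangle P_{k,N}$ and \eqref{biortho} give $\langle\pi_N g,Q_{i,N}\rangle=\langle g,Q_{i,N}\rangle$ for $0\le i\le N-1$, so that $\langle TP_{j,N},Q_{i,N}\rangle=\langle\pi_N(xP_{j,N}),Q_{i,N}\rangle=a_{ij}$. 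Since the characteristic polynomial of an endomorphism does not depend on the chosen basis, this proves $\chi_N(z)=\det(z-\pi_N M\pi_N)$. The only delicate point is the middle paragraph — the index bookkeeping and the verification that $\sum_\sigma\mathrm{sgn}(\sigma)\prod_i a_{k_i,k_{\sigma(i)}}=\det[a_{k_i,k_j}]_{i,j=1}^\ell$ — which can, if preferred, be bypassed by noting that $M\pi_N$ is a finite-rank operator on $L^2(\mu_N)$ with kernel $xK_N(x,y)$, so that the displayed formula for $\E[e_\ell]$ is the $\ell$-th coefficient in the terminating Fredholm expansion of $\det(\1+\lambda M\pi_N)$, hence the $\ell$-th elementary symmetric function of the spectrum of $M\pi_N$; one then concludes using the invariance of the nonzero spectrum when $M\pi_N$ is replaced by $\pi_N M\pi_N$, together with the fact that adjoining zeros leaves elementary symmetric functions unchanged.
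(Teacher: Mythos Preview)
Your proof is correct and follows the paper's strategy: expand $\chi_N$ via Vieta's formulas, use the determinantal correlation identity \eqref{defDPP} to evaluate $\E\big[\sum_{i_1\neq\cdots\neq i_\ell}x_{i_1}\cdots x_{i_\ell}\big]$, and match with the coefficient expansion of $\det(z-\pi_NM\pi_N)$. The only difference is cosmetic: the paper recognises $\int\det[x_jK_N(x_i,x_j)]_{i,j=1}^\ell\prod\mu_N(\di x_i)$ directly as the $\ell$-th term of the Fredholm expansion of $\det(z-\pi_NM\pi_N)$ (your ``bypass'' is precisely this), whereas your primary argument inserts \eqref{kernelsplit}, integrates in the $(P_{k,N},Q_{k,N})$ basis, and identifies the result with the principal-minor expansion of the characteristic polynomial of the matrix $A=(a_{ij})$ --- a more elementary route that avoids quoting Fredholm theory but reproduces the same computation in coordinates. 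One small remark: your appeal to Assumption~\ref{structassump}(b) for integrability is stronger than needed; the proposition holds under the standing hypotheses of Section~\ref{trace} alone, the relevant integrals being finite as soon as the moments $\E[x_i^\ell]$ exist.
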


\begin{proof}
On the one hand,  Vieta's formulas provide 
\[
\mathbb E\left[\,\prod_{i=1}^N(z-x_i)\right] =z^N+\sum_{n=1}^N\frac{1}{n!}(-1)^nz^{N-n}\;\E\left[\sum_{i_1\,\neq\,\cdots\,\neq \,i_n }x_{i_1}\cdots \,x_{i_n}\right]
\]
and \eqref{defDPP}  yields for any $1\leq n \leq N$
\[
\E\left[\sum_{i_1\,\neq\,\cdots\,\neq \,i_n }x_{i_1}\cdots \,x_{i_n}\right]
=\int\det\Big[x_jK(x_i,x_j)\Big]_{i,j=1}^n\prod_{i=1}^n\mu_N(\di x_i).
\]
On the other hand, since $\pi_N M\pi_N$ is an integral operator acting on $\im(\pi_N)$ with kernel $(x,y)\mapsto yK_N(x,y)$, the  Fredholm's expansion, see e.g. \cite{GGK}, reads 
\[
\det\big(z-\pi_N M\pi_N\big)  
= z^N+\sum_{n=1}^N\frac{1}{n!}(-1)^nz^{N-n}\int\det \Big[x_jK_N(x_i,x_j)\Big]_{i,j=1}^n\prod_{i=1}^n\mu_N(\di x_i),
\]
from which Proposition \ref{chiNrepresentation} follows.
\end{proof}

The next immediate corollary  will be of important use in what follows. 

\begin{corollary}
\label{tracezero}
For any $\ell\in\N$, 
\[
\int x^\ell \nu_N(\di x)=\frac{1}{N}\tr\big((\pi_N M\pi_N)^\ell\big).
\]
\end{corollary}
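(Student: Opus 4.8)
The goal is to identify $\int x^\ell \nu_N(\di x) = \frac{1}{N}\sum_{i=1}^N z_i^\ell$ with $\frac{1}{N}\Tr\big((\pi_N M\pi_N)^\ell\big)$, where the $z_i$ are the zeros of $\chi_N$ and the determinant in Proposition~\ref{chiNrepresentation} is that of endomorphisms of the finite-dimensional space $\im(\pi_N)$. The plan is to reduce the statement to a purely linear-algebraic identity: for a linear operator $A$ on a finite-dimensional vector space of dimension $N$ whose characteristic polynomial is $\det(z-A) = \prod_{i=1}^N(z-z_i)$, the power sums of the eigenvalues are the traces of the powers, i.e. $\sum_{i=1}^N z_i^\ell = \Tr(A^\ell)$.

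First I would invoke Proposition~\ref{chiNrepresentation}, which gives $\chi_N(z) = \det(z - \pi_N M\pi_N)$ with the determinant taken over $\im(\pi_N)$, a vector space of dimension $N = \operatorname{rank}(\pi_N)$. Hence the zeros $z_1,\dots,z_N$ of $\chi_N$, counted with multiplicity, are exactly the eigenvalues of the endomorphism $A := \pi_N M\pi_N$ of $\im(\pi_N)$, counted with algebraic multiplicity. Next I would recall the standard fact that for any endomorphism $A$ of an $N$-dimensional complex vector space, one has $\Tr(A^\ell) = \sum_{i=1}^N \lambda_i^\ell$ where $\lambda_1,\dots,\lambda_N$ are the eigenvalues of $A$ with algebraic multiplicity; this follows, for instance, by putting $A$ in upper-triangular (Jordan or Schur) form, noting that $A^\ell$ is then also upper-triangular with diagonal entries $\lambda_i^\ell$, and that the trace of an upper-triangular matrix is the sum of its diagonal entries, while the algebraic multiplicities are preserved under conjugation. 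Applying this with $A = \pi_N M\pi_N$ and $\lambda_i = z_i$ yields $\Tr\big((\pi_N M\pi_N)^\ell\big) = \sum_{i=1}^N z_i^\ell = N \int x^\ell \nu_N(\di x)$, which is the claimed identity after dividing by $N$.

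One small point worth addressing cleanly is that $\Tr$ here should be understood as the trace of $(\pi_N M\pi_N)^\ell$ as an operator on the finite-dimensional space $\im(\pi_N)$; equivalently, since $\pi_N$ is a (not necessarily orthogonal) projection onto $\im(\pi_N)$ with $\pi_N M\pi_N$ vanishing on a complement, this coincides with the trace of the operator $(\pi_N M\pi_N)^\ell$ on all of $L^2(\mu_N)$, the latter being trace-class as a finite-rank operator. I would make this identification explicit so that the notation $\Tr\big((\pi_N M\pi_N)^\ell\big)$ matches the tracial notation used in Lemmas~\ref{Etrace} and~\ref{Vartrace}. Since no genuine analytic difficulty arises, the ``main obstacle'' is merely bookkeeping: ensuring that algebraic multiplicities of zeros of $\chi_N$ correspond to algebraic (not geometric) multiplicities of eigenvalues of $\pi_N M\pi_N$, which is automatic from the characteristic-polynomial identity in Proposition~\ref{chiNrepresentation} and the triangularization argument above. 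Thus the corollary is essentially immediate from Proposition~\ref{chiNrepresentation} together with the Newton-type identity $\sum_i z_i^\ell = \Tr(A^\ell)$ for matrices.
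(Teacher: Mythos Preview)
Your proof is correct and matches the paper's approach: the paper states this as an immediate corollary of Proposition~\ref{chiNrepresentation}, and you have simply spelled out the standard linear-algebra identity $\sum_i z_i^\ell = \Tr(A^\ell)$ that makes it immediate. Your additional remarks about algebraic multiplicities and the identification of the trace on $\im(\pi_N)$ with the trace on $L^2(\mu_N)$ are accurate clarifications, though the paper does not bother to mention them.
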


The second step  is to rewrite the traces in terms of weighted lattice paths.

\subsection{Step 2 : Lattice paths representations}
\label{latpath}
We introduce for each $N$ the oriented graph $\mathcal G_N= (\mathcal V_N , \mathcal E_N)$ having  $\mathcal V_N=\N^2$ for vertices and for  edges
\[
\mathcal E_N =\Big\{(n,k)\rightarrow (n+1,m), \qquad  n,k\in\N,\quad 0\leq m \leq k+\frak q_N \Big\}.
\] 
To each edge  is associated a weight 
\[
w_N\Big((n,k)\rightarrow (n+1,m)\Big)= \langle xP_{k,N} , Q_{m,N} \rangle_{L^2(\mu_N)},
\]
and  the weight of a finite length oriented path $\gamma$  on $\mathcal G_N$ is defined as the product of the weights of the edges contained in $\gamma$, namely
\eq
\label{defweight}
w_N(\gamma)=\prod_{e\in\mathcal E_N : \;e\subset\gamma}w_N(e).
\qe
Then the following holds.

\begin{lemma} For any $\ell\in\N$,
\label{latpath1}
\eq
\mathbb E\left[\,\int x^\ell \hat\mu^N(\di x)\right] =\frac{1}{N}\sum_{k=0}^{N-1}\sum_{\gamma : (0,k)\rightarrow (\ell,k)}w_N(\gamma),
\qe
where the rightmost summation concerns all the oriented paths on $\mathcal G_N$ starting from $(0,k)$ and ending at $(\ell,k)$.
\end{lemma}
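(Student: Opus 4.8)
The plan is to turn the tracial identity of Lemma \ref{Etrace} into the claimed sum over paths by iterating the one-step multiplication rule \eqref{decomposition}. First I would recall that, as computed at the beginning of the proof of Lemma \ref{Etrace}, writing out $\tr(\pi_N M^\ell\pi_N)$ over the biorthogonal system gives
\[
\mathbb E\left[\,\int x^\ell\hat\mu^N(\di x)\right]
=\frac1N\,\tr\big(\pi_N M^\ell\pi_N\big)
=\frac1N\sum_{k=0}^{N-1}\langle x^\ell P_{k,N},Q_{k,N}\rangle_{L^2(\mu_N)} ,
\]
so the whole point is the algebraic identity $\langle x^\ell P_{k,N},Q_{k,N}\rangle_{L^2(\mu_N)}=\sum_{\gamma:(0,k)\to(\ell,k)}w_N(\gamma)$ for each fixed $0\le k\le N-1$.

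To get it, I would expand $x^\ell P_{k,N}$ in the family $(P_{m,N})_{m\in\N}$ by applying \eqref{decomposition} $\ell$ successive times: setting $k_0=k$, one step gives $xP_{k_0,N}=\sum_{k_1=0}^{k_0+\frak q_N}\langle xP_{k_0,N},Q_{k_1,N}\rangle P_{k_1,N}$, and multiplying again by $x$ and re-expanding each $xP_{k_1,N}$, and so on, after $\ell$ steps one reaches the finite sum
\[
x^\ell P_{k,N}=\sum\Bigg(\prod_{j=1}^{\ell}\langle xP_{k_{j-1},N},Q_{k_j,N}\rangle\Bigg)P_{k_\ell,N},
\]
where the summation ranges over all $(k_1,\dots,k_\ell)$ with $k_0=k$ and $0\le k_j\le k_{j-1}+\frak q_N$. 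I would stress that this step uses Assumption \ref{structassump} in its full strength: part (a) supplies the infinite biorthogonal family so that \eqref{decomposition} is valid for \emph{every} index $k_{j-1}\in\N$ --- in particular for intermediate indices that may exceed $N-1$ --- while part (b) keeps each of the $\ell$ sums finite, so there is no convergence question.

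To conclude, I would identify a sequence $k_0=k,k_1,\dots,k_\ell$ as above with the sequence of heights of an oriented path $\gamma$ of length $\ell$ on $\mathcal G_N$ issued from $(0,k)$, since the constraint $0\le k_j\le k_{j-1}+\frak q_N$ is exactly the adjacency rule defining $\mathcal E_N$; and, by the definition of the edge weights together with \eqref{defweight}, the coefficient $\prod_{j=1}^\ell\langle xP_{k_{j-1},N},Q_{k_j,N}\rangle$ is precisely $w_N(\gamma)$. Pairing the displayed expansion of $x^\ell P_{k,N}$ with $Q_{k,N}$ in $L^2(\mu_N)$ and using the biorthogonality \eqref{Biortho} retains exactly the terms with $k_\ell=k$, i.e. the paths ending at $(\ell,k)$, which yields the desired identity; summing over $0\le k\le N-1$ and dividing by $N$ finishes the proof. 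I do not expect a genuine obstacle here: the only delicate points are ensuring that \eqref{decomposition} is legitimate for all indices and not just the first $N$ of them --- which is precisely why Assumption \ref{structassump}(a) was imposed --- and the bookkeeping that on $\mathcal G_N$ the first coordinate counts the number of multiplications by $x$ already performed while the second records the current index.
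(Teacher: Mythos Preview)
Your argument is correct and follows essentially the same route as the paper: iterate the one-step expansion \eqref{decomposition} to write $x^\ell P_{k,N}$ as a sum over length-$\ell$ paths weighted by $w_N(\gamma)$, then pair with $Q_{k,N}$ via \eqref{Biortho} and sum over $k$, invoking Lemma~\ref{Etrace}. The paper compresses the induction into the single displayed identity \eqref{AA} for $(\pi_N M^\ell\pi_N)P_{k,N}$, but the content is the same; your version is simply more explicit about why Assumption~\ref{structassump}(a) is needed at intermediate indices.
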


\begin{proof} It  follows inductively on $\ell$  from \eqref{decomposition} and the definition \eqref{defweight} that

\eq
\label{AA}
(\pi_NM^\ell\pi_N)P_{k,N}=\sum_{m=0}^{N-1}\left(\sum_{\gamma : (0,k)\rightarrow (\ell,m)}w_N(\gamma)\right)P_{m,N},\qquad \ell,k\in\N.
\qe
Thus, we obtain from the biorthogonality relations \eqref{biortho}
\begin{align}
\label{AAA}
{\rm Tr}\big(\pi_N M^\ell\pi_N\big) = &\; \sum_{k=0}^{N-1}\langle (\pi_NM^\ell\pi_N)P_{k,N} , Q_{k,N}\rangle_{L^2(\mu_N)}\nonumber\\
= &\; \sum_{k=0}^{N-1}\sum_{\gamma : (0,k)\rightarrow (\ell,k)}w_N(\gamma),
\end{align}
and Lemma \ref{latpath1}  follows from Lemma \ref{Etrace}.

\end{proof}

Next, we introduce 
\eq
\label{DN}
D_N=\Big\{(n,m)\in\N^2 : \; m \geq N\Big\}
\qe
and obtain a similar  representation for the moments of $\nu_N$.

\begin{lemma}  
\label{latpath2}
For any $\ell\in\N$,
\eq
\int x^\ell \nu_N(\di x)=\frac{1}{N}\sum_{k=0}^{N-1}\sum_{\gamma : (0,k)\rightarrow (\ell,k),\;\gamma\cap D_N=\emptyset}w_N(\gamma).
\qe
\end{lemma}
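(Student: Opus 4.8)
The plan is to follow the proof of Lemma \ref{latpath1} almost verbatim, the only change being that inserting a copy of $\pi_N$ between each pair of consecutive $M$'s should amount to forbidding the lattice paths to ever enter the region $D_N$ defined in \eqref{DN}. By Corollary \ref{tracezero} it is enough to identify $\tr\big((\pi_N M\pi_N)^\ell\big)$ with the announced sum over paths avoiding $D_N$.

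First I would prove, by induction on $\ell\geq 0$, the analogue of \eqref{AA} for the projected operator, namely
\eq
(\pi_N M\pi_N)^\ell P_{k,N}=\sum_{m=0}^{N-1}\Bigg(\sum_{\gamma:(0,k)\rightarrow(\ell,m),\;\gamma\cap D_N=\emptyset}w_N(\gamma)\Bigg)P_{m,N},\qquad 0\leq k\leq N-1.
\qe
The case $\ell=0$ is clear since the only path of length zero issued from $(0,k)$ ends at $(0,k)$, has weight $1$, and misses $D_N$ because $k\leq N-1$. For the induction step I would apply $\pi_N M\pi_N$ to the right-hand side: for $0\leq m\leq N-1$ one has $\pi_N M\pi_N P_{m,N}=\pi_N(xP_{m,N})$, and combining the decomposition \eqref{decomposition} with the vanishing of $\pi_N$ on $P_{j,N}$ for $j\geq N$ gives
\[
\pi_N M\pi_N P_{m,N}=\sum_{j=0}^{N-1}w_N\big((\ell,m)\rightarrow(\ell+1,j)\big)P_{j,N},
\]
with the convention that an edge weight is $0$ when the edge is absent from $\mathcal E_N$. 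Reading off the coefficient of each $P_{j,N}$ with $0\leq j\leq N-1$, I would then recognize precisely the sum over all paths $(0,k)\rightarrow(\ell+1,j)$ that avoid $D_N$: such a path is nothing but a $D_N$-avoiding path $(0,k)\rightarrow(\ell,m)$ — which forces $m\leq N-1$ — extended by one edge $(\ell,m)\rightarrow(\ell+1,j)$, and the restriction $j\leq N-1$ is exactly the requirement that the new endpoint stay outside $D_N$. Pairing the displayed identity (for general $\ell$, with $k$ running over $0,\ldots,N-1$) with $Q_{k,N}$ and using the biorthogonality \eqref{biortho} as in the proof of Lemma \ref{latpath1}, only the diagonal terms $m=k$ survive, which yields the desired tracial formula; together with Corollary \ref{tracezero} this proves Lemma \ref{latpath2}.

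The single delicate point is the combinatorial bookkeeping in the induction step: one must verify that truncating the expansion at index $N-1$ after each projection corresponds, weight for weight, to discarding exactly the paths meeting $D_N$. Once the one-edge-extension picture above is set up this is immediate, so I do not expect any genuine obstacle beyond carefully matching the two descriptions of the path set.
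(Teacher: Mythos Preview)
Your proposal is correct and follows essentially the same route as the paper: establish the projected analogue of \eqref{AA} by induction, noting that each intermediate $\pi_N$ kills the $P_{j,N}$ with $j\geq N$ and hence prunes exactly the paths touching $D_N$, then take the trace via biorthogonality and invoke Corollary \ref{tracezero}. The paper states the key identity in one line (``similarly than for \eqref{AA}'') and records that $\pi_N^2=\pi_N$ to identify $(\pi_NM\pi_N)^\ell$ with $\underbrace{\pi_NM\cdots\pi_NM}_{\ell}\pi_N$, but your more explicit induction is the same argument.
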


\begin{proof} Similarly than for \eqref{AA}, we have
\eq
(\,\underbrace{\pi_NM \cdots \pi_NM}_{\ell}\pi_N )P_{k,N}=\sum_{m=0}^{N-1}\left(\sum_{\gamma : (0,k)\rightarrow (\ell,m),\;\gamma\cap D_N=\emptyset}w_N(\gamma)\right)P_{m,N},\qquad \ell,k\in\N.
\qe
Since $\pi_N^2=\pi_N$, this yields
\begin{align}
{\rm Tr}\big((\pi_N M\pi_N)^\ell\big) = &\; \sum_{k=0}^{N-1}\langle (\,\underbrace{\pi_NM \cdots \pi_NM}_{\ell}\pi_N )P_{k,N} , Q_{k,N}\rangle_{L^2(\mu_N)}\nonumber\\
= &\; \sum_{k=0}^{N-1}\sum_{\gamma : (0,k)\rightarrow (\ell,k), \;\gamma\cap D_N=\emptyset}w_N(\gamma)
\end{align}
and thus Lemma \ref{latpath2}, because of Corollary \ref{tracezero}.
\end{proof}

If we denote by $\gamma(m)$ the ordinate of a path $\gamma$ at abscissa $m$, then we can  represent the variance of the moments of $\hat\mu^N$ in a similar fashion. 

\begin{lemma} 
\label{latpath3}
For any $\ell\in\N$,
\eq
\label{variancepaths}
\mathbb V{\rm ar}\left[\,\int x^\ell \hat\mu^N(\di x)\,\right]=\frac{1}{N^2}\sum_{k=0}^{N-1}\sum_{\gamma : (0,k)\rightarrow (2\ell,k),\; \gamma(\ell)\geq N}w_N(\gamma).
\qe
\end{lemma}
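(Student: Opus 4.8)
The plan is to derive the variance formula by combining the tracial representation of the variance from Lemma~\ref{Vartrace} with the lattice path expansion used in Lemmas~\ref{latpath1} and~\ref{latpath2}. Recall Lemma~\ref{Vartrace} states that
\[
\mathbb Var\left[\,\int x^\ell \hat\mu^N(\di x)\right] = \frac{1}{N^2}\Big( {\rm Tr}\big(\pi_N M^{2\ell}\pi_N\big)-{\rm Tr}\big(\pi_N M^\ell\pi_N M^\ell\pi_N\big)\Big),
\]
so the task reduces to expressing the bracketed difference of traces as a single sum over paths $\gamma : (0,k)\to(2\ell,k)$ subject to the constraint $\gamma(\ell)\geq N$.

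First I would expand each of the two traces separately using the path machinery already developed. From \eqref{AAA} we immediately get
\[
{\rm Tr}\big(\pi_N M^{2\ell}\pi_N\big)=\sum_{k=0}^{N-1}\sum_{\gamma:(0,k)\to(2\ell,k)}w_N(\gamma),
\]
i.e.\ the sum over \emph{all} length-$2\ell$ closed paths returning to their starting height, with no restriction on the intermediate vertices. For the second trace, I would insert the identity \eqref{AA} twice: writing $\pi_N M^\ell\pi_N M^\ell\pi_N = (\pi_N M^\ell\pi_N)(\pi_N M^\ell\pi_N)$ and applying \eqref{AA} to compute the action on $P_{k,N}$, the middle projector $\pi_N$ forces the intermediate height $m$ (the ordinate at abscissa $\ell$) to satisfy $0\leq m\leq N-1$, since $(\pi_N M^\ell\pi_N)P_{m,N}$ only involves $P_{j,N}$ with $j\leq N-1$ and the biorthogonality kills everything else. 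Concatenating the two path segments — one from $(0,k)$ to $(\ell,m)$ and one from $(\ell,m)$ to $(2\ell,k)$, both weighted multiplicatively so that their product is exactly $w_N$ of the concatenated path — and summing over $0\leq m\leq N-1$ gives
\[
{\rm Tr}\big(\pi_N M^\ell\pi_N M^\ell\pi_N\big)=\sum_{k=0}^{N-1}\sum_{\substack{\gamma:(0,k)\to(2\ell,k)\\ \gamma(\ell)\leq N-1}}w_N(\gamma).
\]

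Subtracting, the paths with $\gamma(\ell)\leq N-1$ cancel exactly, leaving precisely the paths $\gamma:(0,k)\to(2\ell,k)$ with $\gamma(\ell)\geq N$, which yields \eqref{variancepaths} after dividing by $N^2$. The one point requiring a little care — and the closest thing to an obstacle — is the bookkeeping in the concatenation step: I must check that an arbitrary path from $(0,k)$ to $(2\ell,k)$ is uniquely decomposed as a pair of subpaths meeting at abscissa $\ell$, that the edge weights multiply correctly across the junction (they do, since the junction is at a vertex, not an edge, and $w_N$ is defined as a product over edges), and that the middle projector $\pi_N$ truncates the intermediate height exactly to $\{0,\dots,N-1\}$ rather than, say, allowing heights up to $N-1+\frak q_N$ from the trailing $M$ before the projection. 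This last point is handled by the same argument as in Lemma~\ref{latpath2}: writing $\pi_N M^\ell\pi_N$ and using \eqref{AA} together with $\pi_N P_{j,N}=P_{j,N}$ for $j\leq N-1$ and $\pi_N P_{j,N}=0$ for $j\geq N$ (which follows from ${\rm Im}(\pi_N)=\Span(P_{k,N})_{k=0}^{N-1}$ and the biorthogonality), the intermediate height is confined to $\{0,\dots,N-1\}$, as needed. Everything else is the routine telescoping already performed in the preceding lemmas.
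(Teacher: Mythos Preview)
Your proposal is correct and follows essentially the same approach as the paper's own proof: both express ${\rm Tr}(\pi_N M^{2\ell}\pi_N)$ and ${\rm Tr}(\pi_N M^\ell\pi_N M^\ell\pi_N)$ as sums over length-$2\ell$ closed paths, the latter with the extra constraint $\gamma(\ell)<N$, and then subtract using Lemma~\ref{Vartrace}. The paper is slightly terser about the concatenation bookkeeping you spell out, but the argument is the same.
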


\begin{proof} 
We have already shown in \eqref{AAA} that
\eq
\label{BB}
{\rm Tr}\big(\pi_N M^{2\ell}\pi_N\big)=\sum_{k=0}^{N-1}\sum_{\gamma : (0,k)\rightarrow (2\ell,k)}w_N(\gamma).
\qe
Since 
\[
(\pi_NM^\ell\pi_NM^\ell\pi_N )P_{k,N}=\sum_{m=0}^{N-1}\left(\sum_{\gamma : (0,k)\rightarrow (\ell,m),\;\gamma(\ell)<N}w_N(\gamma)\right)P_{m,N},\qquad \ell,k\in\N,
\]
we moreover obtain  
\eq
\label{BBB}
{\rm Tr}\big(\pi_N M^\ell\pi_N M^\ell\pi_N\big)=\sum_{k=0}^{N-1}\sum_{\gamma : (0,k)\rightarrow (2\ell,k),\; \gamma(\ell)<N}w_N(\gamma).
\qe
Lemma \ref{latpath3} is then a consequence of Lemma \ref{Vartrace} and \eqref{BB}--\eqref{BBB}.
\end{proof}
We are now in position to complete the proofs of Theorems \ref{mainth} and \ref{concentration}.
\subsection{Step 3 : Majorations and conclusions}
\label{prooftheorems}

Let us first provide a proof for Theorem \ref{mainth} assuming that Theorem \ref{concentration} holds.
\begin{proof}[Proof of Theorem \ref{mainth}]
It is enough to prove that  for any given $\ell\in\N$  
\eq
\label{toprove}
\lim_{N\rightarrow\infty}\left|\mathbb E\left[\,\int x^\ell \hat\mu^N(\di x)\right] -\int x^\ell \nu_N(\di x)\right|=0,
\qe
since \eqref{mainthcsq} would then follow from Theorem \ref{concentration}, together with  the Chebyshev inequality and  the Borel-Cantelli lemma. As a consequence of  Lemmas \ref{latpath1} and \ref{latpath2}, we obtain
\eq
\label{mainth1}
\mathbb E\left[\,\int x^\ell \hat\mu^N(\di x)\right] -\int x^\ell \nu_N(\di x)= \frac{1}{N}\sum_{k=0}^{N-1}\sum_{\gamma : (0,k)\rightarrow (\ell,k),\; \gamma\cap D_N\neq\emptyset}w_N(\gamma).
\qe
Since by following an edge  of $\mathcal G_N$ one increases the ordinate by  at most $\frak q_N$, the  rightmost sum of \eqref{mainth1} will bring  null contribution if $k$ is strictly less that $N-\ell\frak  q_N $. Observe moreover that the vertices explored by any path $\gamma$ going from $(0,k)$ to $(\ell,k)$ for some $N-\ell\frak q_N\leq k\leq N-1$ such that $\gamma \cap D_N\neq \emptyset$ form a subset of 
\[
\Big\{ (n,m)\in\N^2 : \quad 0 \leq n\leq \ell,\quad N-\ell\frak q_N \leq m < N+\ell\frak q_N \Big\}.
\]
As a consequence, if one roughly bounds from above  the number of such paths by $(2\ell\frak q_N)^\ell$, one obtains from \eqref{mainth1} that 
\begin{multline}
\label{upperb}
\left|\,\mathbb E\left[\,\int x^\ell \hat\mu^N(\di x)\right] -\int x^\ell \nu_N(\di x)\,\right|\\
\leq \frac{ \left(2\ell\frak q_N\right)^\ell}{N}\max_{k,m\in\N : \;  \left|\frac{k}{N}-1\right| \leq \frac{\ell\frak q_N }{N},\; \left|\frac{m}{N}-1\right| \leq \frac{\ell\frak q_N}{N}}\left| \langle xP_{k,N} , Q_{m,N}\rangle_{L^2(\mu_N)}\right|^\ell.
\end{multline}
It then follows from \eqref{upperb} together with the growth assumptions \eqref{growthqN} and \eqref{sequence} that \eqref{toprove} holds, and the proof of Theorem \ref{mainth} is therefore complete up to the proof of Theorem \ref{concentration}.
\end{proof}

We now prove Theorem \ref{concentration} by using similar arguments than in the proof of Theorem \ref{mainth}.

\begin{proof}[Proof of Theorem \ref{concentration}]

Again, because following an edge of $\mathcal G_N$  increases the ordinate of at most $\frak q_N$, the rightmost sum of \eqref{variancepaths} brings zero contribution except when  $k\geq N-\ell\frak q_N $. Observe also that the vertices explored by any path $\gamma$ going from $(0,k)$ to $(2\ell,k)$ for some $N-\ell\frak q_N\leq k\leq N-1$ and satisfying $\gamma(\ell)\geq N$  form a subset of 
\[
\Big\{ (n,m)\in\N^2 : \quad 0 \leq n\leq 2\ell,\quad N-2\ell\frak q_N \leq m < N+2\ell\frak q_N \Big\}.
\]
As a consequence, we obtain from Lemma \ref{latpath3} the (rough) upper-bound
\begin{multline}
\label{CCC}
\mathbb V{\rm ar}\left[\,\int x^\ell \hat\mu^N(\di x)\,\right]\\
\leq \frac{(4\ell\frak q_N )^{2\ell}}{N^2}\max_{k,m\in\N : \;  \left|\frac{k}{N}-1\right| \leq \frac{2\ell\frak q_N }{N},\; \left|\frac{m}{N}-1\right| \leq \frac{2\ell\frak q_N}{N}}\left| \langle xP_{k,N} , Q_{m,N}\rangle_{L^2(\mu_N)}\right|^{2\ell}.
\end{multline}
Using the sub-power growth/boundedness assumptions on $\frak q_N$ and on the left-hand side of \eqref{sequence},  Theorem \ref{concentration}  follows.

\end{proof}

\section{Application to multiple orthogonal polynomials}
\label{MOP}

MOPs have been introduced in the context of the Hermite-Pad\'e approximation of Stieltjes functions, which was itself first motivated by  number theory after  Hermite's  proof of the transcendence of $e$, or Ap\'ery's proof of the irrationality of $\zeta(2)$ and $\zeta(3)$, see  \cite{VA99} for a survey. For our purpose here, we will focus on the so-called type II MOPs, for which the zeros are of important interest since they are  the poles of the rational approximants provided by the Hermite-Pad\'e theory. These polynomials generalize orthogonal polynomials in the sense that we consider more than one measure of orthogonalization, and a class of classical MOPs such as  multiple versions of the Hermite, Laguerre,  Jacobi, Charlier, Meixner, etc,  polynomials  emerged \cite{CVA0, ABVA, ACVA}. They are already the subject of many works where they are studied as special functions; we refer to the monograph \cite{Is} for further information. 

It turns out that even for the multiple Hermite or multiple  Laguerre polynomials,  no general description of the limiting zero distribution seems yet available in the literature. To motivate further the importance of an asymptotic description for the zeros, let us mention that they play an important role in the strong asymptotic  for MOPs. For example, the work \cite{LW} of Lysov and Wielonsky deals with the strong asymptotics of multiple Laguerre polynomials in the case where $r=2$. A key ingredient in their analysis is the a priori knowledge of an algebraic equation for the Cauchy-Stieltjes transform of the limiting zero distribution (that they denote $\psi(z)$, up to a trivial rescaling), see equation \cite[(1.4)]{LW}. Our purpose in this section is to show that our results allow to transport the powerful technology developed in free probability  to the description of such zero distributions. In particular, we obtain algebraic equations for the Cauchy-Stieltjes transform of the multiple Hermite and Laguerre polynomials in the general case where $r\geq 2$.  

Let us first introduce MOPs.

\subsection{Multiple orthogonal polynomials}
\label{MOPdef}
Let $\mu$   be a Borel measure on $\R$  with infinite support and having all its moments. Consider  $r\geq 1$ pairwise distinct functions $w_{1},\ldots,w_{r}$ in $L^2(\mu)$.  

\begin{definition}
\label{defMOPII} Given a  multi-index $\bv n=(n_1,\ldots,n_r)\in\N^r$, the $\bv n$-th (type  \2) MOP  associated to the weights $w_1,\ldots,w_r$ and the measure $\mu$ is the unique monic polynomial $P_{\bv n}$ of degree $n_1+\cdots +n_r$ which satisfies the  orthogonality relations
\begin{align}
\label{MOPIIortho}
\int x^kP_{\bv n}(x) w_1(x)\mu(\di x)=0, &\qquad 0\leq k \leq n_1-1,\nonumber\\
\vdots\qquad\qquad  &\qquad \qquad\vdots  \\
\int x^kP_{\bv n}(x) w_r(x)\mu(\di x)=0, &\qquad 0\leq k \leq n_r-1 \nonumber.
\end{align}

\end{definition}

Note that the existence/uniqueness of the $\bv n$-th MOP is not automatic, and depends on whether the  system of linear equations  \eqref{MOPIIortho}  admits a unique solution. We say that a multi-index $\bv n$ is normal if it is indeed  the case. Since by taking $r=1$ we clearly recover OPs,  we shall assume $r\geq 2$ in what follows.

Let   $(\bv n^{(N)})_{N\in\N}=(n_1^{(N)},\ldots,n_r^{(N)})_{N\in\N}$ be a sequence of normal multi-indices
which satisfies the following path-like structure.
 \begin{enumerate}
 \item[(a)]
For every $N\in\N$, 
\[
\sum_{i=1}^Nn_i^{(N)}=N.
\]   
\item[(b)]
For every $N\in\N$ and $1\leq i\leq r$, 
\[ 
n ^{(N+1)}_i\geq \;n^{(N)}_i.
\]
 \item[(c)]
There exists $R\in\N$ such that for any $N\in\N$ and $1\leq i\leq r$,
 \eq
 \label{finiterecassumption}
 n_i^{(N+R)}\geq \;n_i^{(N)}+1.
 \qe
  \item[(d)]
 For every $1\leq i \leq r$, there exist $q_1,\ldots,q_r\in(0,1)$ such that
 \eq
 \label{path3}
\lim_{N\rightarrow\infty}\frac{n_i^{(N)}}{N}=q_i.
\qe
 \end{enumerate}
 We then write for convenience
 \eq
 \label{conven}
 P_N(x)=P_{\bv n^{(N)}}(x),\qquad N\in\N,
 \qe
and observe that $P_N$ has degree $N$. We now focus on the weak convergence for the zero counting probability measure $\nu_N$ of $P_N$ as $N\rightarrow\infty$, defined as in \eqref{zerocountingmeasure} with $z_1,\ldots,z_N$  the zeros of  $P_N(x)$, maybe up to a rescaling of the zeros. 
Before showing how our results answer that question in the case of  the multiple Hermite and multiple Laguerre polynomials, we first need to introduce a few ingredients from  free probability theory.

\subsection{Elements of free probability}

Free probability  deals with non-commutative random variables which are  independent in an algebraic sense. It has been introduced by Voiculescu for the purpose of solving operator algebra problems. We now just provide the few elements of free probability needed for the purpose of this work, and refer to \cite{VDN,AGZ} for comprehensive introductions.

For a probability measure $\lambda$ on $\R$ with compact support, let  $K_\lambda$ be the inverse, for the composition of formal series, of the Cauchy-Stieltjes transform
\begin{align}
\label{CS}
G_\lambda(z) & =\int\frac{\lambda(\di x)}{z-x}\\
& =\sum_{k=0}^\infty \left(\,\int x^k\lambda(\di x)\right)z^{-k-1}\nonumber,
\end{align}
and set the $R$-transform of $\lambda$ by
\eq
\label{Rdef}
R_\lambda(z)=K_\lambda(z)-\frac{1}{z}.
\qe
\begin{definition} Let $\lambda$ and $\eta$ be two probability measures on $\R$ with compact support. The free additive convolution of $\lambda$ and $\eta$, denoted by  $\lambda\boxplus\eta$, is the unique probability measure (on $\R$ with compact support) which satisfies
\eq
\label{Rlin}
R_{\lambda\boxplus\eta}(z)=R_\lambda(z)+R_\eta(z).
\qe
\end{definition}

 Consider a probability measure $\lambda$ on $[0,+\infty)$  with compact support different from $\delta_0$. If   $\chi_\lambda$ is the inverse for the composition of formal series of
\eq
\frac{1}{z}G_\lambda\left(\frac{1}{z}\right)-1 = \sum_{k=1}^\infty \left(\,\int x^k\lambda(\di x)\right)z^{k},\\
\qe
 we then define the $S$-transform of $\lambda$ by
\eq
\label{Sdef}
S_\lambda(z)=\frac{1+z}{z}\chi_\lambda(z).
\qe
\begin{definition} Let $\lambda$ and $\eta$ be two probability measures on $[0,+\infty)$ with compact support and both different from $\delta_0$. The free multiplicative convolution of $\lambda$ and $\eta$, denoted  $\lambda\boxtimes\eta$, is the unique probability measure (on $[0,+\infty)$ with compact support and different from $\delta_0$) which satisfies
\eq
\label{Slin}
S_{\lambda\boxtimes\eta}(z)=S_\lambda(z)S_\eta(z).
\qe
\end{definition}

For this work, the importance of the free additive and multiplicative convolutions relies on the following results due to Voiculescu, extracted from \cite{AGZ}, which describe the limiting eigenvalue distribution of perturbed GUE and Wishart matrices. A random matrix $\bv X_N$ is distributed according to  ${\rm GUE}(N)$ if it is drawn from the space $\mathcal H_N(\C)$ of $N\times N$ Hermitian matrices according to the probability distribution 
\eq
\label{GUEdistr}
\frac{1}{Z_N}\exp\Big\{- N {\rm Tr}(\bv X_N^2)/2\Big\}\di\bv X_N,
\qe
where $\di\bv X_N$ stands for the Lebesgue measure on $\mathcal H_N(\C)\simeq \R^{N^2}$ and $Z_N$ is a normalization constant. It is said to  be distributed according to ${\rm Wishart}_{\alpha}(N)$, where $\alpha> -1$ if a real parameter, if the probability distribution reads instead
\eq
\label{Wdistr}
\frac{1}{Z_N}\det(\bv X_N)^{N\alpha}\exp\Big\{-N {\rm Tr}(\bv X_N)\Big\} \bv 1_{\big\{\bv X_N\geq \,0\big\}}\di\bv X_N,
\qe
where $\bv X_N\geq 0$ means that $\bv X_N$ is positive semi-definite. The semi-circle distribution is defined by
\eq
\label{SC}
\mu_{{\rm SC}}(\di x)=\frac{1}{2\pi}\sqrt{4-x^2}\;\bs 1_{[-2,2]}(x)\di x,
\qe
and  the (rescaled) Marchenko-Pastur distribution of parameter $\rho> 0$ by
\eq
\label{MPalpha}
\mu_{{\rm MP}(\rho)}(\di x)=\max(1-\frac{1}{\rho},0)\delta_0+\frac{1}{2\pi x}\sqrt{(\rho_+- x)(x-\rho_-)}\,\bs 1_{[\rho_-,\,\rho_+]}(x)\di x,
\qe
where $\rho_{\pm}=(1\pm\sqrt{\rho}\,)^2/\rho$.
Then the following holds.

\begin{theorem} 
\label{freepro} Consider a sequence of uniformly bounded deterministic matrices $(\bv A_N)_N$, were $\bv A_N$ is  an $N\times N$  Hermitian matrix, and assume there exists a probability measure $\lambda$ on $\R$ with compact support such that for all $\ell\in\N$,
\[
 \lim_{N\rightarrow\infty}\frac{1}{N}\Tr \big(\bv A_N\big)^\ell =\int x^\ell \lambda(\di x).
\]
\begin{itemize}
\item[{\rm (a)}]
If $(\bv X_N)_N$ is a sequence of independent random matrices with $\bv X_N$ distributed according to ${\rm GUE}(N)$, then for all $\ell\in\N$,
\[
\lim_{N\rightarrow\infty}\frac{1}{N}\,\E\Big[\Tr \big(\bv X_N+\bv A_N\big)^\ell \Big]=\int x^\ell \mu_{{\rm SC}}\boxplus\lambda(\di x).
\]
\item[{\rm (b)}]
If $(\bv X_N)_N$ is a sequence of independent random matrices with $\bv X_N$ distributed according to ${\rm Wishart}_\alpha(N)$, and if the $\bs A_N$'s are moreover positive semi-definite with $\lambda\neq \delta_0$, then for all $\ell\in\N$,
\[
\lim_{N\rightarrow\infty}\frac{1}{N}\,\E\Big[\Tr \big(\bv A_N^{1/2}\bv X_N\bv A_N^{1/2}\big)^\ell\Big] =\int x^\ell \mu_{{\rm MP}(\frac{1}{1+\alpha})}\boxtimes\lambda(\di x).
\]
\end{itemize}
\end{theorem}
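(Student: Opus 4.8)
The plan is to derive Theorem \ref{freepro} from Voiculescu's asymptotic freeness theorem combined with the classical description of the limiting spectral distributions of the Gaussian and Wishart ensembles; both facts are standard and proved in detail in \cite[Chapter 5]{AGZ}, so I only outline the argument.

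I would work in the non-commutative probability space $\big(\M_N(\C),\,\tfrac1N\E\Tr\big)$. For part (a), the first ingredient is Wigner's theorem, which gives $\tfrac1N\E\Tr(\bv X_N^\ell)\to\int x^\ell\mu_{\rm SC}(\di x)$ for all $\ell$ when $\bv X_N$ is drawn from ${\rm GUE}(N)$. The core of the proof is then to show that $\bv X_N$ and the deterministic matrix $\bv A_N$ become \emph{asymptotically free} as $N\to\infty$: one expands a generic alternating mixed moment
\eq
\tfrac1N\,\E\,\Tr\big(\bv X_N^{k_1}\bv A_N^{\ell_1}\bv X_N^{k_2}\bv A_N^{\ell_2}\cdots\big)
\qe
by the Wick (Isserlis) formula for the Gaussian entries of $\bv X_N$, which produces a sum over pair partitions of the $\bv X_N$-legs, each weighted by a power of $N$ governed by the genus of the associated ribbon graph and by a normalized trace of a word in the $\bv A_N$'s. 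As $N\to\infty$ only the planar (non-crossing) pairings contribute, and a bookkeeping check shows the surviving expression is exactly the one dictated by freeness of a semicircular element and an element of distribution $\lambda$. Since $\bv X_N$ and $\bv A_N$ are then asymptotically free, the limiting distribution of $\bv X_N+\bv A_N$ is, by the very definition of $\boxplus$ through the additivity \eqref{Rlin} of the $R$-transform, the free convolution $\mu_{\rm SC}\boxplus\lambda$, which proves (a).

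For part (b) the analogous spectral input is the Marchenko--Pastur theorem, $\tfrac1N\E\Tr(\bv X_N^\ell)\to\int x^\ell\mu_{{\rm MP}(\alpha)}(\di x)$ for $\bv X_N$ from ${\rm Wishart}_\alpha(N)$. I would first observe that $\bv A_N^{1/2}\bv X_N\bv A_N^{1/2}$ and $\bv X_N\bv A_N$ have the same non-zero eigenvalues, so $\tfrac1N\Tr\big(\bv A_N^{1/2}\bv X_N\bv A_N^{1/2}\big)^\ell=\tfrac1N\Tr(\bv X_N\bv A_N)^\ell$; writing $\bv X_N$ in terms of its underlying rectangular Gaussian matrix, the same Wick/genus expansion again yields asymptotic freeness of $\bv X_N$ and $\bv A_N$. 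For free positive elements the distribution of the product $\bv A_N^{1/2}\bv X_N\bv A_N^{1/2}$ is by definition the free multiplicative convolution, characterized by the multiplicativity \eqref{Slin} of the $S$-transform, whence the limit $\mu_{{\rm MP}(\alpha)}\boxtimes\lambda$. The assumptions that the $\bv A_N$ be positive semi-definite and that $\lambda\neq\delta_0$ are precisely what makes the $S$-transform and $\boxtimes$ well defined; the case of singular $\bv A_N$ is dealt with by replacing $\bv A_N$ with $\bv A_N+\varepsilon\,\mathrm{Id}$, running the argument, and letting $\varepsilon\downarrow0$, using continuity of the power-traces in the matrix entries.

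The main obstacle is the asymptotic freeness step: making the genus expansion rigorous, bounding the non-planar contributions uniformly in the length of the word, and recognizing the planar part as the free-probability prescription — this is exactly the content of Voiculescu's theorem and its combinatorial proof. The remaining ingredients (Wigner's and the Marchenko--Pastur laws, the eigenvalue identity for $\bv A_N^{1/2}\bv X_N\bv A_N^{1/2}$, and the $\varepsilon$-regularization) are routine.
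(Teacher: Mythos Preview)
The paper does not prove Theorem~\ref{freepro}: it is stated as a known result ``due to Voiculescu, extracted from \cite{AGZ}'' and simply invoked as a black box in the proofs of Theorems~\ref{ThHermite} and~\ref{ThLaguerre}. Your outline is precisely the standard approach found in \cite[Chapter~5]{AGZ} (Wick expansion, genus counting, identification of the planar contribution with the free-cumulant prescription), so there is nothing to compare---you have supplied a sketch where the paper gives only a citation.
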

 
 We are now in position to state the results of this section.

 \subsection{Multiple Hermite  polynomials}
Recall that if $H_N$ stands for the $N$-th Hermite polynomial, that is the OP associated to $\mu(\di x)=e^{-x^2/2} \di x$, then the zero counting probability distribution $\nu_N$ of its rescaled version $H_N(\sqrt{N}x)$ is known to converge weakly  towards the semi-circle distribution \eqref{SC}.

Given $r\geq 2$  pairwise distinct real numbers $a_1,\ldots,a_r$, consider the measure and the weights given by
\[
\mu(\di x)=e^{-x^2/2}\di x,\qquad w_j(x)=e^{a_jx},\qquad 1\leq j \leq r.
\]
The associated MOPs are called multiple Hermite polynomials. For a sequence of multi-indices $(\bv n^{(N)})_N$ satisfying the path-like structure described in Section \ref{MOPdef}, denote by $ H_{N}^{(a_1\,,\,\ldots\,,\,a_r)}$ the associated MOP as in \eqref{conven}. We shall prove the following.

\begin{theorem} 
\label{ThHermite}
Let $\nu_N$ be the zero probability distribution of the rescaled multiple Hermite polynomial 
\[
H_{N}^{(\sqrt{N}a_1\,,\,\ldots\,,\,\sqrt{N}a_r)}\big(\sqrt{N}x\big).
\]
Then  $\nu_N$ converges weakly as $N\rightarrow\infty$  towards 
\[\mu_{{\rm SC}}\boxplus\Big(\sum_{j=1}^rq_j\delta_{a_j}\Big).\]
\end{theorem}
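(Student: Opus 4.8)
The plan is to use Corollary \ref{part->zero}, which reduces the statement to two tasks: first, check that the hypotheses of Theorem \ref{mainth} (the sub-power growth of the window of recurrence coefficients) hold for the rescaled multiple Hermite polynomials, so that the MOP Ensemble is covered; second, identify the limiting mean moment $\lim_N \E[\int x^\ell \hat\mu^N(\di x)]$ with the moments of $\mu_{{\rm SC}}\boxplus(\sum_j q_j\delta_{a_j})$, and observe that the zeros of $\chi_N$ are real. The last point is the easiest: multiple Hermite polynomials with respect to distinct $a_j$ form an AT (Angelesco--Tchebycheff) system, so their zeros are real and simple, and lie on $\R$; this is classical and can be cited from \cite{Is, CVA0}.

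The key identification step is to recognize the MOP Ensemble for multiple Hermite polynomials as the eigenvalue process of a \emph{GUE matrix with an external source}. Concretely, the joint density \eqref{MOPdensity} with $\mu(\di x)=e^{-x^2/2}\di x$ and $w_j(x)=e^{a_jx}$ is, up to rescaling, the eigenvalue density of $\bv X_N+\bv A_N$ where $\bv X_N\sim{\rm GUE}(N)$ (in the normalization \eqref{GUEdistr}) and $\bv A_N$ is a deterministic Hermitian matrix whose eigenvalue $a_j$ has multiplicity $n_j^{(N)}$ — this is exactly the Bleher--Kuijlaars model \cite{BK}. After the rescaling $x\mapsto\sqrt N x$ and $a_j\mapsto\sqrt N a_j$ that appears in Theorem \ref{ThHermite}, the empirical eigenvalue measure $\hat\mu^N$ of this process has the same law as the empirical spectral measure of $\bv X_N+\bv A_N$ in the normalization of Theorem \ref{freepro}, where $\bv A_N$ has limiting spectral distribution $\lambda=\sum_{j=1}^r q_j\delta_{a_j}$ by the path condition \eqref{path3}. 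Theorem \ref{freepro}(a) then gives
\[
\lim_{N\rightarrow\infty}\E\left[\,\int x^\ell \hat\mu^N(\di x)\right]=\int x^\ell\, \mu_{{\rm SC}}\boxplus\Big(\sum_{j=1}^r q_j\delta_{a_j}\Big)(\di x),\qquad \ell\in\N,
\]
and since $\sum_j q_j\delta_{a_j}$ is compactly supported, so is the free convolution, hence it is characterized by its moments. This supplies hypothesis (b) of Corollary \ref{part->zero}.

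It remains to verify hypothesis \eqref{sequence} of Theorem \ref{mainth}. Here $\frak q_N=1$, and by Remark \ref{RH} the coefficients $\langle xP_{k,N},Q_{m,N}\rangle_{L^2(\mu_N)}$ are the recurrence coefficients of the multiple Hermite polynomials; what is needed is that those lying in the window $|k/N-1|\le\varepsilon$, $|m/N-1|\le\varepsilon$ grow slower than any power of $N$. In fact, for the external-source model these recurrence coefficients are controlled in terms of the operator $\pi_N M\pi_N = \pi_N(\bv X_N+\bv A_N)\pi_N$ restricted to the $N$-dimensional space; since $\bv A_N$ is uniformly bounded after rescaling and the GUE part concentrates, the relevant coefficients are in fact bounded (not merely sub-power), which also activates the $\alpha=1$ case of Theorem \ref{concentration}. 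One can make this rigorous either by invoking the explicit asymptotics of the recurrence coefficients for multiple Hermite polynomials available through the Riemann--Hilbert analysis of \cite{BK} (cf. \cite[Section 5]{GKVA}), or, more self-containedly, by bounding the spectral radius $\rho_N$ of $\pi_N M\pi_N$: by Proposition \ref{chiNrepresentation} the zeros of $\chi_N$ are the eigenvalues of $\pi_N M\pi_N$, these are real and interlace with (or are dominated by) the spectrum of $\bv X_N+\bv A_N$, which is almost surely bounded for large $N$; a deterministic a priori bound on $\|\pi_N M\pi_N\|$ then bounds every matrix entry, hence every recurrence coefficient in the window. The main obstacle is precisely this uniform control of the recurrence coefficients: while intuitively clear, it requires either borrowing the delicate Deift--Zhou asymptotics for the external source model or producing a clean operator-norm argument, and care is needed because $\pi_N$ is not an orthogonal projection, so $\|\pi_N M\pi_N\|$ is not immediately bounded by $\|M\|$ on the relevant subspace. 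Once that bound is in hand, Theorem \ref{mainth} applies, Corollary \ref{part->zero} concludes, and $\nu_N\to\mu_{{\rm SC}}\boxplus(\sum_j q_j\delta_{a_j})$ weakly.
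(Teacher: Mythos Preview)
Your overall architecture is exactly the paper's: identify the rescaled multiple Hermite MOP Ensemble with the eigenvalue process of ${\rm GUE}(N)+\bv A_N$ via Bleher--Kuijlaars, invoke Voiculescu's theorem (Theorem~\ref{freepro}(a)) to obtain the moment limit $\mu_{\rm SC}\boxplus(\sum_j q_j\delta_{a_j})$, note that the AT property gives real zeros, and close with Corollary~\ref{part->zero}. That part is fine.

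The gap is precisely where you flag it: the verification of hypothesis~\eqref{sequence}. Neither of your two suggested routes is satisfactory. The spectral-radius argument does not work: the coefficients $\langle xP_{k,N},Q_{m,N}\rangle_{L^2(\mu_N)}$ are matrix entries of $M$ in a \emph{biorthogonal} (not orthonormal) basis, so even a bound on $\|\pi_N M\pi_N\|$ would not bound them; more decisively, condition~\eqref{sequence} requires control for $k,m$ ranging up to $N(1+\varepsilon)$, i.e.\ strictly \emph{outside} the $N$-dimensional range of $\pi_N$, so the operator $\pi_N M\pi_N$ simply does not see those entries. The Riemann--Hilbert route would in principle deliver the bounds, but it is a very heavy tool for what is needed here.

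The paper instead uses that the nearest-neighbour recurrence coefficients of the multiple Hermite polynomials are known in closed form (Van~Assche \cite[Section~5.2]{VA2}): after the rescaling \eqref{Herrescmeas} they read
\[
a_{\bv n,N}^{(d)}=a_d,\qquad b_{\bv n,N}^{(d)}=\frac{n_d}{N},
\]
which are manifestly bounded uniformly in $N$ and $\bv n$. Lemma~\ref{criterion2} then converts boundedness of these nearest-neighbour coefficients into boundedness of the full window of $\langle xP_{k,N},Q_{m,N}\rangle_{L^2(\mu_N)}$'s required by~\eqref{sequence}. This is both elementary and exact---no asymptotic analysis is needed---and it is the missing ingredient in your argument.
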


Although we introduced the $R$-transform of a probability measure as a formal series, it is actually possible to define it  as a proper  analytic function, provided  one restricts oneself to appropriate subdomains of the complex plane, and  equality \eqref{Rlin} continues to hold, see \cite[Section 5]{BV}. Then, since $R_{\mu_{{\rm SC}}}(z)=z$ and the Cauchy-Stieltjes transform of $\sum_{i=1}^rq_i\delta_{a_i}$ is explicit,  one can  obtain from \eqref{Rlin} that the Cauchy-Stiejles transform $G$ of $\mu_{{\rm SC}}\boxplus\big(\sum_{j=1}^rq_j\delta_{a_j}\big)$  is an algebraic function, by performing similar manipulations than in the proof of \cite[Lemma 1]{Bi} and concluding by analytic continuation. More precisely, one obtains that 

\begin{corollary} The weak limit of $\nu_N$ has a Cauchy-Stieltjes transform $G$ which  satisfies the algebraic equation
\eq
P\big(z,G(z)\big)=0,\qquad z\in\C,
\qe
where the bivariate polynomial $P(z,w)$ is given by
\eq
\label{bivHermite}
P(z,w)=w\prod_{i=1}^r(z-w-a_i)-\sum_{i=1}^rq_i\prod_{j=1,\,j\neq i}^r(z-w-a_i).
\qe
\end{corollary}
Probability measures for which the Cauchy-Stieltjes transform is algebraic have interesting regularity properties, see \cite[Section 2.8]{AZ}, and are moreover suitable for  numerical evaluation, see e.g. \cite{ER}.

We now turn to multiple Laguerre polynomials, for which we provide a similar analysis. 
 \subsection{Multiple Laguerre  polynomials}
If  $L^{(\alpha)}_N$ stands for the $N$-th Laguerre polynomial of parameter $\alpha >-1$, that is the OP associated to $\mu(\di x)=x^\alpha e^{-x}\bs 1_{[0,+\infty)}(x)\di x$, then it is known that the zero probability distribution $\nu_N$ of $L_N^{(N\alpha)}(Nx)$ converges weakly as $N\rightarrow\infty$ towards the Marchenko-Pastur distribution  \eqref{MPalpha} of parameter $1/(1+\alpha)$.
 
There exist two different definitions for the multiple Laguerre polynomials in the literature, see \cite[Section  23.4]{Is}. We   consider here the so-called multiple Laguerre polynomials of the second kind, which are defined as follows.  Given $r\geq 2$  pairwise distinct positive numbers $a_1,\ldots,a_r$ and $\alpha\geq 0$, consider 
\[
\mu(\di x)=x^\alpha e^{-x}\bs 1_{[0,+\infty)}(x)\di x,\qquad w_j(x)=e^{(1-a_j)x}, \qquad 1\leq j \leq r,
\]
and, given  a sequence of multi-indices $(\bv n^{(N)})_N$ satisfying the path-like structure described previously, let $ L_{N}^{(\alpha\,;\,a_1\,,\,\ldots\,,\,a_r)}$  be the associated MOP as in \eqref{conven}.

\begin{theorem} 
\label{ThLaguerre}
Let $\nu_N$ be the zero probability distribution of the rescaled multiple Laguerre polynomial 
\[
L_{N}^{(N\alpha \,;\, N a_1\,,\,\ldots\,,\, N a_r)}(x).
\]
 Then  $\nu_N$ converges weakly as $N\rightarrow\infty$  towards 
\[
\mu_{{\rm MP}(\frac{1}{1+\alpha})}\boxtimes\Big(\sum_{j=1}^rq_j\delta_{1/a_j}\Big).
\]
\end{theorem}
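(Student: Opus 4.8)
The plan is to apply Corollary~\ref{part->zero} to the sequence of multiple Laguerre polynomials $L_{N}^{(N\alpha\,;\,Na_1,\ldots,Na_r)}$, viewed as the average characteristic polynomials of the associated MOP Ensembles. Three things must be checked: that the hypothesis \eqref{sequence} of Theorem~\ref{mainth} holds, that these polynomials have real zeros for $N$ large, and that the mean empirical measure converges in moments to $\mu_{{\rm MP}(\alpha)}\boxtimes\big(\sum_j q_j\delta_{1/a_j}\big)$. For the reality of the zeros, I would invoke the fact that for weights of the form $w_j(x)=e^{c_jx}$ with distinct $c_j$ on $[0,\infty)$ the system forms an AT (Algebraic Chebyshev) system, so all multi-indices are normal and the type~II MOP has exactly $N$ simple zeros in $(0,\infty)$ --- this is standard and can be cited from \cite{Is} or \cite{Ku1}.

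The heart of the argument is a probabilistic model for the MOP Ensemble whose mean empirical measure we can compute via Theorem~\ref{freepro}(b). The key observation (this is precisely the Bleher--Kuijlaars mechanism, and the reason MOP Ensembles were introduced) is that the eigenvalues of $\bv A_N^{1/2}\bv X_N\bv A_N^{1/2}$, where $\bv X_N\sim{\rm Wishart}_\alpha(N)$ and $\bv A_N$ is the fixed diagonal matrix with eigenvalue $1/a_j$ of multiplicity $n_j^{(N)}$, form exactly a MOP Ensemble of the form \eqref{MOPdensity} with measure $\mu_N(\di x)=x^{N\alpha}e^{-Nx}\bs 1_{[0,\infty)}(x)\di x$ and weights $w_{j,N}(x)=e^{N a_j^{-1}... }$ --- more precisely, after the change of variables diagonalising $\bv A_N$, the Harish-Chandra/Itzykson--Zuber-type integration over the unitary group produces the Vandermonde times the determinant of exponentials structure, with the $a_j$-dependence entering through $w_{j,N}(x)=e^{-N(1+a_j)x}$ up to reparametrisation. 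By Kuijlaars~\cite[Proposition 2.2]{Ku1}, the average characteristic polynomial of this ensemble is the corresponding type~II MOP, which after matching the scaling is exactly $L_{N}^{(N\alpha\,;\,Na_1,\ldots,Na_r)}$ (possibly up to the harmless rescaling $x\mapsto x$ already built into the statement). Then Theorem~\ref{freepro}(b) applies: since $\bv A_N$ is positive semidefinite, uniformly bounded (the $1/a_j$ are fixed), and $\frac1N\Tr(\bv A_N)^\ell\to\sum_j q_j a_j^{-\ell}=\int x^\ell\,d\big(\sum_j q_j\delta_{1/a_j}\big)$ by \eqref{path3}, we get $\frac1N\E[\Tr(\bv A_N^{1/2}\bv X_N\bv A_N^{1/2})^\ell]\to\int x^\ell\,d\big(\mu_{{\rm MP}(\alpha)}\boxtimes\sum_j q_j\delta_{1/a_j}\big)$, which is exactly \eqref{momentconv} with $\mu^*=\mu_{{\rm MP}(\alpha)}\boxtimes\big(\sum_j q_j\delta_{1/a_j}\big)$. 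This measure has compact support (free multiplicative convolution of compactly supported measures), hence is determined by its moments.

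It remains to verify \eqref{sequence}, i.e. a sub-power growth bound on the ``recurrence coefficients'' $\langle xP_{k,N},Q_{m,N}\rangle_{L^2(\mu_N)}$ for $k,m$ near $N$. Here $\frak q_N=1$ since we are dealing with a MOP Ensemble. I would deduce the bound from the matrix model: the operator $\pi_N M\pi_N$ restricted to $\im(\pi_N)$ is conjugate to the matrix $\bv A_N^{1/2}\bv X_N\bv A_N^{1/2}$, so its spectral radius is $\|\bv A_N^{1/2}\bv X_N\bv A_N^{1/2}\|\le\|\bv A_N\|\,\|\bv X_N\|$; but this controls only the window $k,m\le N-1$, not the full window $|k/N-1|\le\varepsilon$ appearing in \eqref{sequence}, which also reaches indices $\ge N$. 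For those larger indices one needs the explicit nearest-neighbour recurrence relations for multiple Laguerre polynomials (see \cite{VA99,Is}), whose coefficients are rational functions of $k/N$, $\alpha$, and the $a_j$ with no poles in a neighbourhood of $s=1$ as long as the $a_j$ are distinct and positive; these are in fact bounded, so \eqref{sequence} holds in the strong (bounded) form and Theorem~\ref{concentration} applies with $\alpha=1$ as well. \textbf{The main obstacle} I anticipate is making the identification of the matrix model with the multiple Laguerre ensemble fully precise --- getting the scalings, the parameters $N\alpha$ versus $\alpha$, and the role of the shift $w_j(x)=e^{(1+a_j)x}$ to match exactly, and confirming that the resulting multi-indices (dictated by the multiplicities $n_j^{(N)}$ of $\bv A_N$) are the normal, path-like ones of Section~\ref{MOPdef}. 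Once the dictionary is set up correctly, the free-probability input and Corollary~\ref{part->zero} do the rest mechanically.
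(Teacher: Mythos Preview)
Your approach is essentially the paper's: build the Wishart-with-source model $\bv Y_N=\bv A_N^{1/2}\bv X_N\bv A_N^{1/2}$ with $\bv A_N$ diagonal on the $1/a_j$'s, identify its eigenvalue process as the relevant MOP Ensemble via \cite{BK2}, use the AT-system property for reality of the zeros, apply Theorem~\ref{freepro}(b) for the moment limit, and verify \eqref{sequence} through the explicit nearest-neighbour recurrence coefficients before invoking Corollary~\ref{part->zero}.

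One correction is needed. Your claim that ``$\pi_N M\pi_N$ restricted to $\im(\pi_N)$ is conjugate to the matrix $\bv A_N^{1/2}\bv X_N\bv A_N^{1/2}$'' is false: by Proposition~\ref{chiNrepresentation} the eigenvalues of the \emph{deterministic} operator $\pi_N M\pi_N$ are the zeros of $\chi_N=\E[\det(z-\bv Y_N)]$, not the random eigenvalues of $\bv Y_N$, so the bound $\|\bv A_N\|\,\|\bv X_N\|$ controls nothing here. You rightly abandon this and fall back on the explicit nearest-neighbour recurrence coefficients; that is exactly the paper's route. The paper quotes the closed formulas from \cite[Section~5.4]{VA2} (not \cite{VA99}), observes they are bounded for $|\,k/N-1\,|\leq\varepsilon$, and then applies Lemma~\ref{criterion2} to pass from the nearest-neighbour coefficients $a_{\bv n,N}^{(d)},\,b_{\bv n,N}^{(d)}$ to the quantities $\langle xP_{k,N},Q_{m,N}\rangle_{L^2(\mu_N)}$ actually appearing in \eqref{sequence}. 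You should invoke that lemma explicitly, since these two families of coefficients are not the same and the bridge between them (worked out in the proof of Lemma~\ref{criterion2} using \eqref{Lrelation} and the path-like structure) is where condition~\eqref{finiterecassumption} enters.
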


As it was the case for the $R$-transform, the $S$-transform can  be defined as an analytic function, and \eqref{Slin} also holds on subdomains of the complex plane, see  \cite[Section 6]{BV}. Then, because $S_{\mu_{{\rm MP}	(\rho)}}(z)=\rho/(1+ \rho z)$,  one can also obtain from  \eqref{Slin}, taking care of the definition domains, that the Cauchy-Stieltjes transform $G$ of $\mu_{{\rm MP}(\frac{1}{1+\alpha})}\boxtimes\big(\sum_{j=1}^rq_j\delta_{1/a_j}\big)$ satisfies an algebraic equation.

\begin{corollary} The weak limit of $\nu_N$ has a Cauchy-Stieltjes transform $G$ which satisfies the algebraic equation
\eq
P\big(z,G(z)\big)=0,\qquad z\in\C,
\qe
where $P(z,w)$ is given by
\eq
\label{bivLaguerre}
P(z,w)=w\prod_{i=1}^r\Big(z-\frac{zw}{a_i}+\frac{\alpha }{a_i}\,\Big)-\sum_{i=1}^rq_i\prod_{j=1,\,j\neq i}^r\Big(z-\frac{zw}{a_i}+\frac{\alpha }{a_i}\,\Big).
\qe
\end{corollary}

\subsection{Proofs} 
\label{proofs4} 

Before providing proofs for Theorems \ref{ThHermite} and \ref{ThLaguerre}, we first  precise a few points  concerning MOP Ensembles, that we introduced in Section \ref{MOP1}. 

A sequence of measures $(\mu_N)_N$,  weights $w_{j,N}\in L^2(\mu_N)$, $1\leq j\leq r$, and a path-like sequence of multi-indices $(\bv n^{(N)})_N$   induce a sequence of MOP Ensembles. Namely, for each $N$ one can associate random variables $x_1,\ldots,x_N$  distributed according to \eqref{MOPdensity} where we chose for the multi-index $\bv n=\bv n^{(N)}$. As Biorthogonal Ensembles, one can chose  $P_{k,N}$ to be the $\bv n^{(k)}$-th (type II) MOP associated with $\mu_N$ and the $w_{j,N}$'s. The associated biorthogonal functions $Q_{k,N}$'s can then be constructed for any $k\in\N$ from the type I MOPs, see \cite[Theorem 23.1.6]{Is}, and Assumption \ref{structassump} is satisfied with $\frak q_N=1$. We moreover recall that the average characteristic polynomial $\chi_N$ equals $P_{N,N}$.

In order to obtain growth estimates for the  $\langle xP_{k,N},Q_{m,N}\rangle_{L^2(\mu_N)}$'s, we now describe a connection with the so-called nearest neighbors recurrence coefficients, which are sometimes easier to compute.

\subsubsection{Nearest neighbors recurrence coefficients}

Van Assche \cite{VA2}  established for general MOPs, says associated to a measure $\mu$ and weights $w_i$'s, that for every normal multi-index $\bv n$ there exist  real numbers $(a_{\bv n}^{(d)})_{1\leq d \leq r}$ and $(b_{\bv n}^{(d)})_{1\leq d \leq r}$ satisfying 
\begin{align}
\label{nearrec}
xP_{\bv n}(x) & = P_{\bv n + \bv e_1} + a_{\bv n}^{(1)} P_{\bv n}(x) + \sum_{d=1}^rb_{\bv n}^{(d)}P_{\bv n  - \bv e_d}(x),\nonumber\\
\vdots &  \\
xP_{\bv n}(x) & = P_{\bv n + \bv e_r} + a_{\bv n}^{(r)} P_{\bv n}(x) + \sum_{d=1}^rb_{\bv n}^{(d)}P_{\bv n  - \bv e_d}(x),\nonumber
\end{align}
where 
\[
\bv e_d=(\,\underbrace{0,\ldots,0}_{d-1},1,0,\ldots,0)\in\N^r,\qquad 1\leq d \leq r.
\]
Note that this provides 
\eq
\label{Lrelation}
P_{\bv n + \bv e_i}(x)- P_{\bv n + \bv e_j}(x)=(a_{\bv n}^{(j)}-a_{\bv n}^{(i)})P_{\bv n}(x),\qquad 1\leq i,j\leq r.
\qe

With the path-like sequence of multi-indices $(\bv n^{(k)})_{k\in\N}$ and allowing the $w_i$'s and $\mu$ to depend on a parameter $N$, we write  for convenience 
\[
a_{k,N}^{(d)}=a_{\bv n^{(k)},\,N}^{(d)}\;, \qquad b_{k,N}^{(d)}=b_{\bv n^{(k)},\,N}^{(d)},\qquad 1\leq d \leq r.
\]
Then the following holds.
\begin{lemma}
\label{criterion2}
If there exists $\varepsilon>0$ such that for every $1\leq d \leq r$ the sequences 
\eq
\label{cond1}
\Bigg\{\max_{k\in\N\,:\;\left|\frac{k}{N}-1\right|\,\leq\, \varepsilon}\;\max_{j=1}^r\big|a_{\bv n^{(k)}-\bv e_j,N}^{(d)}\big|\Bigg\}_{N\geq 1}, \qquad 
\Bigg\{\max_{k\in\N\,:\;|\frac{k}{N}-1|\,\leq\, \varepsilon}\big|b_{k,N}^{(d)}\big|\Bigg\}_{N\geq 1},
\qe
are bounded, then  so is the sequence
\[
\label{sequence2}
\left\{\;\max_{k,m\in\N \,:\;\left|\frac{k}{N}-1\right|\leq \,\varepsilon,\;\left|\frac{m}{N}-1\right|\leq \,\varepsilon}\left|\langle xP_{k,N},Q_{m,N}\rangle_{L^2(\mu_N)}\right|\;\right\}_{N\geq 1}.
\]
\end{lemma}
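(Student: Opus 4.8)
The plan is to bound the inner products $\langle xP_{k,N}, Q_{m,N}\rangle_{L^2(\mu_N)}$ by relating them to the nearest neighbors recurrence coefficients through the biorthogonality between $P_{k,N}$ and $Q_{m,N}$. Recall from \eqref{decomposition} that $\langle xP_{k,N}, Q_{m,N}\rangle_{L^2(\mu_N)}$ is exactly the coefficient of $P_{m,N}$ in the expansion of $xP_{k,N}$ in the basis $(P_{j,N})_{j\in\N}$. So the task is to control, for $|k/N - 1|\le\varepsilon$, the coefficients appearing when one writes $xP_{k,N}$ in the $P$-basis, and to show only indices $m$ with $|m/N-1|\le\varepsilon$ contribute (which we already know since $\frak q_N = 1$ here, so $xP_{k,N}\in\Span(P_{j,N})_{j=0}^{k+1}$).

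First I would use the system \eqref{nearrec}: for the path-like sequence, passing from $\bv n^{(k)}$ to $\bv n^{(k+1)}$ amounts to adding some $\bv e_{i(k)}$ for an appropriate index $i(k)\in\{1,\dots,r\}$ determined by the path. Picking the row of \eqref{nearrec} corresponding to $d = i(k)$ gives
\[
xP_{k,N}(x) = P_{k+1,N}(x) + a_{k,N}^{(i(k))}P_{k,N}(x) + \sum_{d=1}^r b_{k,N}^{(d)}P_{\bv n^{(k)}-\bv e_d,N}(x).
\]
The first term is $P_{k+1,N}$, with coefficient $1$; the second is $P_{k,N}$ with coefficient $a_{k,N}^{(i(k))}$, controlled by the first family in \eqref{cond1} (indexing appropriately, noting $\bv n^{(k)} = \bv n^{(k-1)} + \bv e_j$ for some $j$ so that $a_{k,N}^{(i(k))} = a_{\bv n^{(k-1)}+\bv e_j - \bv e_j + \bv e_j,\dots}$ — more simply, $a_{k,N}^{(d)}$ is directly among the bounded quantities once we allow the shift). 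The real work is the sum $\sum_d b_{k,N}^{(d)}P_{\bv n^{(k)}-\bv e_d,N}$: each $P_{\bv n^{(k)}-\bv e_d,N}$ is a monic polynomial of degree $k-1$, but it is \emph{not} one of the basis polynomials $P_{j,N} = P_{\bv n^{(j)},N}$ unless $\bv n^{(k)}-\bv e_d$ happens to lie on the path. So I would expand each $P_{\bv n^{(k)}-\bv e_d,N}$ in the $P$-basis. Using the relations \eqref{Lrelation}, which express the difference of two ``neighboring'' MOPs $P_{\bv m + \bv e_i} - P_{\bv m + \bv e_j}$ as a bounded multiple of $P_{\bv m}$, one can connect any near-path multi-index $\bv n^{(k)} - \bv e_d$ to the actual path vertex $\bv n^{(k-1)}$ at the same total degree $k-1$ by a finite number (bounded in terms of $r$ and the structure constant $R$) of such steps, each contributing a coefficient $a_{\cdot,N}^{(j)} - a_{\cdot,N}^{(i)}$ bounded by the first sequence in \eqref{cond1}. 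Composing these, $P_{\bv n^{(k)}-\bv e_d,N}$ equals $P_{k-1,N}$ plus a bounded-coefficient combination of lower-degree $P_{j,N}$'s, all with $|j/N - 1|\le\varepsilon$ for $k$ in the relevant window and $N$ large.

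Putting these together, for $|k/N-1|\le\varepsilon$ the expansion of $xP_{k,N}$ in the $P$-basis has all coefficients bounded by a fixed polynomial expression in the quantities appearing in \eqref{cond1}, hence uniformly bounded in $N$ by hypothesis; and all indices $m$ occurring satisfy $|m/N-1|\le\varepsilon$ for $N$ large (adjusting $\varepsilon$ slightly, which is harmless). Since $\langle xP_{k,N}, Q_{m,N}\rangle_{L^2(\mu_N)}$ is precisely such a coefficient (or $0$), this gives the desired boundedness of the sequence in \eqref{sequence2}. The main obstacle I anticipate is the bookkeeping in the second paragraph: carefully tracking which multi-indices are ``one step off'' the path, verifying that the number of applications of \eqref{Lrelation} needed to walk from $\bv n^{(k)}-\bv e_d$ back to a path vertex is bounded uniformly in $k$ (here condition (c), i.e. \eqref{finiterecassumption}, is what keeps the path from wandering too far from any near-path index), and checking that all the multi-index shifts stay within the window $|\cdot/N - 1|\le\varepsilon$ so that the hypothesized bounds in \eqref{cond1} actually apply — rather than any genuinely hard estimate.
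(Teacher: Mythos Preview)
Your approach is essentially the paper's: pick the row of \eqref{nearrec} with $d=i(k)$ to get $xP_{k,N}=P_{k+1,N}+a_{k,N}^{(i(k))}P_{k,N}+\sum_d b_{k,N}^{(d)}P_{\bv n^{(k)}-\bv e_d,N}$, then iterate \eqref{Lrelation} to peel off $P_{k-1,N},P_{k-2,N},\ldots$ from each $P_{\bv n^{(k)}-\bv e_d,N}$, with condition~(c) (via the finite recurrence bound $\langle xP_{k,N},Q_{m,N}\rangle=0$ for $m<k-R$, cf.\ \cite[(23.1.7)]{Is}) guaranteeing that after at most $R$ steps the remainder contributes nothing. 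The only point you leave implicit is this last vanishing, which is exactly what makes the ``walk'' terminate rather than continue indefinitely; the paper states it up front as \eqref{zeroreccoef}.
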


\begin{proof}
First, as a consequence of \eqref{finiterecassumption} and \cite[(23.1.7)]{Is}, we have 
\eq
\label{zeroreccoef}
\langle xP_{k,N},Q_{m,N}\rangle_{L^2(\mu_N)}=0,\qquad  m<k-R.
\qe
Define the sequence  $(i_k)_{k\in\N}$ taking its values in $\{1,\ldots,r\}$ by
\[
\bv n^{(k+1)}=\bv n^{(k)}+\bv e_{i_{k}},Ê\qquad m\in\N.
\] 
For a fixed $k$, which may be chosen as large as we want, \eqref{nearrec}   yields
\eq
\label{rec1}
xP_{k,N}(x)=P_{k+1,N}(x)+a_{k,N}^{(i_{k})}P_{k,N}(x)+\sum_{d=1}^r b_{k,N}^{(d)}P_{\bv n^{(k)}-\bv e_d,N}(x).
\qe
Then, since \eqref{Lrelation} provides  for any  $1\leq d \leq r$ and $m$  large enough
\[
P_{\bv n^{(m)} - \bv e_d,N}(x)=P_{m-1,N}(x)+\big(a_{\bv n^{(m-1)}-\bv e_d,N}^{(d)}-a_{\bv n^{(m-1)}-\bv e_d,N}^{(i_{m-1})}\big)P_{\bv n^{(m-1)} -\bv e_d,N}(x),\\
\]
we obtain inductively  with  \eqref{rec1}  that 
\begin{multline}
\label{rec2}
xP_{k,N}(x) =P_{k+1,N}(x)+a_{k,N}^{(i_{k})}P_{k,N}(x)+\Big(\sum_{d=1}^r b_{k,N}^{(d)}\Big)P_{k-1,N}(x)\\
   + \sum_{m=k-R}^{k-2}\left(\;\sum_{d=1}^r b_{k,N}^{(d)}\prod_{l=m+1}^{k-1}\big(a_{\bv n^{(l)}-\bv e_d,N}^{(d)}-a_{\bv n^{(l)}-\bv e_d,N}^{(i_{l})}\big)\right)P_{m,N}(x) + R_{k,N}(x),
\end{multline}
where $R_{k,N}$ is a polynomial of degree at most $k-R-1$. By comparing \eqref{rec2} with the  (unique) decomposition \eqref{decomposition} and \eqref{zeroreccoef}, we  obtain  explicit formulas for the $\langle xP_{k,N},Q_{m,N}\rangle$'s  in terms of the nearest neighbor recurrence coefficients, from which Lemma \ref{criterion2} easily follows.
\end{proof}

\subsubsection{Proof of Theorem \ref{ThHermite}}

\begin{proof}
Associate to the multi-indices $(\bv n^{(N)})_{N\in\N}$ the (uniformly bounded) sequence $(\bv A_N)_{N\in\N}$ of diagonal matrices
\[
\bv A_N ={\rm diag}\big(\, \underbrace{a_1,\ldots, a_1}_{n_1^{(N)}}, \,\ldots\, , \underbrace{a_r,\ldots, a_r}_{n_r^{(N)}} \,\big)\in\mathcal H_N(\C).
\] 
On the one hand, let $(\bv X_N)_N$ be a sequence of independent random matrices, with $\bv X_N$ distributed according to ${\rm GUE}(N)$. If  $\hat\mu^N$ stands for the empirical measure associated to the eigenvalues of  $\bv Y_N=\bv X_N+\bv A_N$, then Theorem \ref{freepro} (a) and \eqref{path3} provide   for any $\ell\in\N$ 
\begin{align}
\label{free+}
\lim_{N\rightarrow\infty}\mathbb E\left[\,\int x^\ell \hat\mu^N(\di x)\right] & = \lim_{N\rightarrow\infty}\frac{1}{N}\,\E\Big[\Tr \big(\bv X_N+\bv A_N\big)^\ell \Big]\nonumber\\
 & =\int x^\ell \mu_{{\rm SC}}\boxplus\Big(\sum_{j=1}^rq_j\delta_{a_j}\Big)(\di x).
\end{align}
On the other hand, observe from \eqref{GUEdistr} that the random matrix $\bv Y_N$ is distributed on $\mathcal H_N(\C)$ according to
\eq
\label{GUEextsource}
\frac{1}{Z'_N}\exp\Big\{-N{\rm Tr}\big(\bv Y_N^2-2\bv A_N\bv Y_N\big)/2\Big\}\di \bv Y_N,
\qe
where $Z_N'$ is a new normalization constant. By performing a spectral decomposition in \eqref{GUEextsource}, integrating out the eigenvectors and using a confluent version of the Harish-Chandra-Itzykson-Zuber formula, Bleher and Kuijlaars \cite{BK} obtained   that the random eigenvalues of $\bv Y_N$ form a MOP Ensemble, see \eqref{MOPdensity}, associated to  the $N$-dependent  weights and  measure 
\eq
\label{Herrescmeas}
\mu_N(\di x)=e^{-Nx^2/2}\di x,\qquad w_{j,N}(x)=e^{Na_jx},\qquad 1\leq j \leq r,
\qe
 and the multi-index $\bv n^{(N)}$. The average characteristic polynomial $\chi_N$ for that MOP Ensemble then equals the associated $\bv n^{(N)}$-th MOP, which is  seen from a change of variable to be  $H_{N}^{(\sqrt{N}a_1\,,\,\ldots\,,\,\sqrt{N}a_r)}\big(\sqrt{N}x\big)$, up to a multiplicative constant. The weights in  \eqref{Herrescmeas} form an AT system, from which it follows that any multi-index is normal, and that $\chi_N$ has real zeros, cf.  \cite[Chapter 23]{Is}. One moreover obtains from \cite[Section 5.2]{VA2} and a change of variables explicit formulas for the nearest neighbors recurrence coefficients associated to \eqref{Herrescmeas}, 
\[
a^{(d)}_{\bv n,N}=a_d,\qquad b^{(d)}_{\bv n,N}=\frac{n_d}{N},\qquad \bv n =(n_1,\ldots,n_r).
\] 
Thus, Theorem \ref{ThHermite}  follows from \eqref{free+}, Lemma \ref{criterion2} and Corollary \ref{part->zero}.
\end{proof}

\subsubsection{Proof of Theorem \ref{ThLaguerre}}
\begin{proof}
The proof follows the same spirit as  the proof of Theorem \ref{ThHermite}. Introduce the sequence of  (uniformly bounded) diagonal matrices 
\[
\bv A_N ={\rm diag}\big(\, \underbrace{1/a_1,\ldots,1/a_1}_{n_1^{(N)}}, \,\ldots\, , \underbrace{1/a_r,\ldots, 1/a_r}_{n_r^{(N)}} \,\big),
\] 
and let $(\bv X_N)_N$ be a sequence of independent   random matrices, where $\bv X_N$ is distributed according to  ${\rm Wishart}_\alpha(N)$.
With $\hat\mu^N$ the empirical measure of the eigenvalues of $\bv Y_N=\bv A_N^{1/2}\bv X_N\bv A_N^{1/2}$, Theorem \ref{freepro} (b) and \eqref{path3} then provide for all $\ell\in\N$
\eq
\label{freex}
\lim_{N\rightarrow\infty}\mathbb E\left[\,\int x^\ell \hat\mu^N(\di x)\right]=\int x^\ell \mu_{{\rm MP}(\frac{1}{1+\alpha})}\boxtimes\Big(\sum_{j=1}^rq_j\delta_{1/a_j}\Big)(\di x).
\qe
Now, observe from \eqref{Wdistr} that $\bv Y_N$ is distributed on $\mathcal H_N(\C)$ according to
\eq
\label{Wishextsource}
\frac{1}{Z'_N}\det(\bv Y_N)^{N\alpha}\exp\Big\{- N{\rm Tr}\big(\bv A_N^{-1}\bv Y_N\big)\Big\}\bv 1_{\big\{\bv Y_N\geq \,0\big\}}\di \bv Y_N,
\qe
where $Z_N'$ is a new normalization constant. Similarly than for the Hermite case, the eigenvalues of $\bv Y_N$ form a MOP Ensemble associated to 
\eq
\label{Lagrescmeas}
\mu_N(\di x)=x^{N\alpha}e^{-Nx}\di x,\qquad w_{j,N}(x)=e^{N(1-a_j)x},\qquad 1\leq j \leq r,
\qe
 and the multi-index $\bv n^{(N)}$, see \cite{BK2}. The average characteristic polynomial $\chi_N$ is then the $\bv n^{(N)}$-th MOP associated to \eqref{Lagrescmeas}, which is $L_{N}^{(N\alpha \, ; \, Na_1\,,\,\ldots\,,\,Na_r)}\big(x\big)$ up to a multiplicative constant. The weights in  \eqref{Lagrescmeas} form an AT system so that any multi-index is normal and $\chi_N$ has real zeros. If we denotes $|\bv n|=n_1+\cdots+ n_r$ for $\bv n\in\N^r$, then one obtains from \cite[Section 5.4]{VA2} that the nearest neighbors recurrence coefficients for \eqref{Herrescmeas} read
\[
a^{(d)}_{\bv n,N}=\frac{n_d(|\bv n|+N\alpha )}{N^2a_d},\qquad b^{(d)}_{\bv n,N}=\frac{|\bv n|+N\alpha+1 }{Na_d}+\sum_{j=1}^r\frac{n_j}{Na_j},\qquad \bv n =(n_1,\ldots,n_r).
\] 
Theorem \ref{ThLaguerre}  finally  follows from \eqref{freex}, Lemma \ref{criterion2} and Corollary \ref{part->zero}.
\end{proof}

\begin{remark} Having in mind the proofs of Theorems \ref{ThHermite} and \ref{ThLaguerre}, it would be of interest to find out if there exists a matrix model for the multiple version of the Jacobi polynomials, the Jacobi-Pi\~ neiro polynomials, which are related in a limiting case to the rational approximations of $\zeta(k)$ and polylogarithms  \cite[Section 23.3.2]{Is}, and then if it would be possible to describe its limiting zero distribution thanks to free convolutions. 
\end{remark}

\paragraph*{Acknowledgements} The author would like to thank Walter Van Assche for the references \cite{Nev,VA},  Steven Delvaux for pointing out the relation \eqref{Lrelation} and for the interesting discussion which followed, Franck Wielonsky for noticing a few typos concerning multiple Laguerre polynomials in an earlier version of this work, and the anonymous referee for improving the readability of the paper. 

The author is supported by FWO-Flanders projects G.0427.09 and by the Belgian Interuniversity
Attraction Pole P07/18.

\end{document}